\documentclass[11pt]{article}
%---------------------------------------------------------------------
\usepackage[numbers,square]{natbib}
\usepackage{amsmath}
\usepackage{amsthm}
\usepackage{amsfonts}
\usepackage{amssymb}
\usepackage{siunitx}
\usepackage{commath}
\usepackage{graphicx}
\usepackage{xspace}
\usepackage{color}
\usepackage[lofdepth,lotdepth]{subfig}
\usepackage{stmaryrd}
\usepackage{url}
\usepackage{amsthm}
\usepackage{graphbox}
\usepackage{footnote}
\makesavenoteenv{tabular}
\makesavenoteenv{table}
\usepackage[hidelinks]{hyperref}
\hypersetup{colorlinks = true, urlcolor = blue,
  linkcolor = blue, citecolor = blue}
\usepackage{cleveref}

\usepackage[margin=0.7in]{geometry}
%------------------------------------------------------------------------------
\makeatletter
\newcommand{\tnorm}{\@ifstar\@tnorms\@tnorm}
\newcommand{\@tnorms}[1]{%
  \left|\mkern-1.5mu\left|\mkern-1.5mu\left|
   #1
  \right|\mkern-1.5mu\right|\mkern-1.5mu\right|
}
\newcommand{\@tnorm}[2][]{%
  \mathopen{#1|\mkern-1.5mu#1|\mkern-1.5mu#1|}
  #2
  \mathclose{#1|\mkern-1.5mu#1|\mkern-1.5mu#1|}
}
\makeatother

\newcommand{\jump}[1]{\llbracket #1 \rrbracket}
\newcommand{\av}[1]{\{\!\!\{#1\}\!\!\}}
%------------------------------------------------------------------------------
\usepackage{xspace,color}

%------------------------------------------------------------------------------
\newtheorem{theorem}{Theorem}
\newtheorem{lemma}{Lemma}

\newtheorem{remark}{Remark}
%------------------------------------------------------------------------------
\title{A coupled HDG/DG method for porous media with conducting/sealing faults}
%------------------------------------------------------------------------------
\author{A. Cesmelioglu\thanks{Department of Mathematics and
    Statistics, Oakland University, MI, USA
    (\url{cesmelio@oakland.edu}),
    \url{https://orcid.org/0000-0001-8057-6349}}
  \and
  M. Kuchta\thanks{Simula Research Laboratory, Oslo, Norway
    (\url{miroslav@simula.no}),
    \url{https://orcid.org/0000-0002-3832-0988}}
  \and
  J. J. Lee\thanks{Department of Mathematics, Baylor University,
    TX, USA (\url{jeonghun_lee@baylor.edu}),
    \url{https://orcid.org/0000-0001-5201-8526}}
  \and
  S. Rhebergen\thanks{Department of Applied Mathematics, University of
    Waterloo, ON, Canada (\url{srheberg@uwaterloo.ca}),
    \url{http://orcid.org/0000-0001-6036-0356}}}
%------------------------------------------------------------------------------
\begin{document}
%------------------------------------------------------------------------------
\maketitle
%------------------------------------------------------------------------------
\begin{abstract}
  We introduce and analyze a coupled hybridizable discontinuous
  Galerkin/discontinuous Galerkin (HDG/DG) method for porous media in
  which we allow fully and partly immersed faults, and faults that
  separate the domain into two disjoint subdomains. We prove
  well-posedness and present an a priori error analysis of the
  discretization. Numerical examples verify our analysis.
\end{abstract}
%------------------------------------------------------------------------------
\section{Introduction}
\label{s:introduction}

Subsurface flow problems are of interest to many areas of science and
engineering such as geophysics, environmental sciences, hydrocarbon
extraction, and geothermal energy production. Faults are geological
structures that are discontinuities of displacement. In subsurface
flow problems, faults can act as conduits or barriers to fluid flow,
depending on the permeability on faults. These fault structures can
cause significant changes in fluid flows, so understanding the
interplay of faults (as conduits or as barriers) and fluid flows is
important for applications. For the remainder of this paper we will
refer to conduits (often called fractures) as {\it conducting faults}
and barriers as {\it sealing faults}.

A mathematical model of subsurface flows with conducting and sealing
faults was proposed in \cite{Martin:2005}. They furthermore analyzed a
mixed finite element method for this problem. After this seminal work,
numerous works on discretizing subsurface flows with faults have
appeared in the literature. These include the hybrid high order method
\cite{Chave:2019}, the interior penalty discontinuous Galerkin method
\cite{liu2024interior}, staggered discontinuous Galerkin methods
\cite{Zhao:2022}, a hybridized interior penalty method
\cite{Leng:2024}, a mixed virtual element method
\cite{Benedetto:2022}, a discrete finite volume method \cite{Li:2021},
a cut finite element method \cite{Burman:2020}, a multipoint flux
approximation method \cite{Cavalcante:2020}, a mixed hybrid mortar
method \cite{Pichot:2010,Pichot:2012}, and a finite volume method
\cite{Chen:2019}.

The Darcy equations for the porous-matrix flow are defined on a
$\dim$-dimensional domain. Fluid flow in faults, however, are modelled
as flow problems on $(\dim-1)$-dimensional domains. In this paper, to
discretize this inter-dimensional problem, we propose a coupled dual
mixed hybridizable discontinuous Galerkin (HDG) method and interior
penalty discontinuous Galerkin (IPDG) method. The HDG method was
originally introduced in \cite{Cockburn:2009a} as an approach to
reduce the computational costs of traditional discontinuous Galerkin
methods. This was achieved by introducing new face unknowns in the
discretization in such a way as to facilitate static condensation. The
introduction of these new face unknowns, defined on
$(\dim-1)$-dimensional faces of the mesh, and their coupling to cell
unknowns defined on $\dim$-dimensional cells of the mesh, however,
also presents a natural framework to deal with the inter-dimensional
problem of porous-matrix flow with flow in faults. The
$\dim$-dimensional Darcy equations for the porous-matrix flow are
discretized using the dual mixed HDG method (also called the LDG-H
method in \cite{Cockburn:2009a}). On $(\dim-1)$-dimensional conducting
faults we discretize the flow equations using the IPDG method
\cite{Arnold:2002,Arnold:1982}. The coupling between the
$\dim$-dimensional HDG and $(\dim-1)$-dimensional IPDG method is
handled automatically by the HDG framework. Finally, sealing faults
are easily included in the HDG discretization as modified interface
conditions.

This paper is organized as follows. In \cref{s:fault,s:hdgipdg} we
introduce the governing equations and its coupled HDG/DG finite
element discretization, respectively. In
\cref{s:wellposedness,s:erroranalysis}, we prove well-posedness of the
discretization and present an a priori error estimate for the
numerical solution. Finally, in \cref{s:numericalresults} we present
numerical results which include test cases that illustrate our
theoretical analysis, as well as some benchmark test cases. We
conclude in \cref{s:conclusions}.

%------------------------------------------------------------------------------
\section{Porous media with faults}
\label{s:fault}

Let $\Omega$ be an open, bounded, connected domain in
$\mathbb{R}^{\dim}$ with $\dim \in \cbr[0]{2,3}$. Let $\Gamma_c$,
$\Gamma_s$ be unions of disjoint $(\dim-1)$-dimensional piecewise
linear submanifolds in $\Omega$. We will refer to $\Gamma_c$ as a
conducting fault and $\Gamma_s$ as a sealing fault.  Denote by
$\partial \Omega$ the polygonal/polyhedral boundary of $\Omega$ and by
$\partial\Gamma_j$ the boundary of $\Gamma_j$, $j = c,s$. We will
consider faults that are fully immersed in $\Omega$ (if
$x \in \partial\Gamma_j$ then $x \notin \partial\Omega$), partly
immersed in $\Omega$ (there exist $x,y \in \partial \Gamma_j$ such
that $x \in \partial\Omega$ and $y \notin \partial \Omega$), and
faults that separate the domain into two disjoint subdomains (if
$x \in \partial\Gamma_j$ then $x \in \partial\Omega$). Let
$\mathring{\Omega}:=\Omega\setminus (\Gamma_c \cup \Gamma_s)$ and $n$
denote the outward unit normal vector to $\Omega$.

Let $\gamma$ be a subset of $\Gamma_j$, $j=c,s$, of positive
$(\dim-1)$-dimensional Lebesgue measure which is obtained by the
intersection of a $(\dim-1)$-dimensional plane and $\Gamma_j$, and let
$\Omega_+^{\gamma}$, $\Omega_-^{\gamma}$ be two disjoint subdomains in
$\Omega$ such that
$\gamma = \partial \Omega_+^{\gamma} \cap \partial
\Omega_-^{\gamma}$. On $\gamma$, $n_{\pm}$ is the unit outward normal
vector field on $\partial \Omega_{\pm}^{\gamma}$. Note that $n_{+}$
($n_-$, resp.) is independent of the choice of $\Omega_+^{\gamma}$
($\Omega_-^{\gamma}$, resp.). Let
$v \in [L^2(\Omega_+^{\gamma} \cup \Omega_-^{\gamma})]^{\dim}$ be such
that its traces on $\gamma$ from $\Omega_{\pm}^{\gamma}$, denoted by
$v_{\pm}$, lie in $[L^2(\gamma)]^{\dim}$. Then, we define
$\jump{v\cdot n}_{\gamma}:=(v_{+}-v_{-})\cdot n_{+}$,
$\av{v\cdot n}_{\gamma} := \tfrac{1}{2}(v_{+} + v_{-}) \cdot
n_{+}$. Similarly, let
$q \in L^2(\Omega_+^{\gamma} \cup \Omega_-^{\gamma})$ such that its
traces on $\gamma$ from $\Omega_{\pm}^{\gamma}$, denoted by $q_{\pm}$,
lie in $L^2(\gamma)$. Then $\jump{q}_{\gamma}:=q_{+}-q_{-}$,
$\av{q}_{\gamma} := \tfrac{1}{2}(q_{+} + q_{-})$. Note that
$\jump{v\cdot n}_{\gamma}$, $\av{v\cdot n}_{\gamma}$,
$\jump{q}_{\gamma}$, $\av{q}_{\gamma}$ are independent of the choices
of $\Omega_+^{\gamma}$, $\Omega_-^{\gamma}$. If $\gamma$ is clear from
the context, then we will write $\jump{v\cdot n}$, $\av{v\cdot n}$,
$\jump{q}$, $\av{q}$ for simplicity.  We depict the domain notation in
\cref{fig:Domain}.

\begin{figure}[t]
  \begin{center}
    \includegraphics[height=0.5\textwidth]{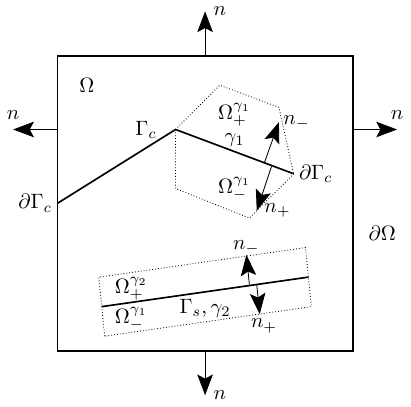}    
    \caption{Illustration of the domain notation in $\mathbb{R}^2$.}
    \label{fig:Domain}
  \end{center}
\end{figure}

In $\Omega$ we denote the Darcy velocity by $u$, the pressure by $p$,
a source term by $g$, and the permeability tensor by
$\kappa$. Following \cite{Martin:2005}, we will assume that $\kappa$
is diagonal and there exist constants $\kappa_{\min}, \kappa_{\max}$
such that $0 < \kappa_{\min} \le \kappa_{jj} \le \kappa_{\max}$ for
$j=1,\hdots,\text{dim}$ almost everywhere in $\Omega$.

The velocity, pressure, permeability, and source/sink term in the
fault $\Gamma_c$ are denoted by $u_f$, $p_f$, $\overline{\kappa}_f$,
and $g_f$, respectively. Next, suppose that a unit normal vector $n$
at $x \in \Gamma_c \cup \Gamma_s$ is uniquely determined up to
orientation and define
$\overline{\kappa}_{f,n}(x):=\kappa(x) n\cdot n$. Let $\tau_i$,
$1 \le i \le \dim$ be an orthonormal basis of the tangent space along
$\Gamma_c \cup \Gamma_s$ at $x$. We can write
$\bar{\kappa}_{f,\tau}(x)$ in terms of this basis:
$\bar{\kappa}_{f,\tau}(x) = \sum_{i=1}^{\dim-1} \bar{\kappa}_i(x)
\tau_i \otimes \tau_i$ with $\bar{\kappa}_{i}(x)$ the expansion
coefficients. Then, for almost all $x \in \Gamma_c \cup \Gamma_s$, we
assume that
$\kappa(x) = \bar{\kappa}_{f,n}(x) n \otimes n + \bar{\kappa}_{f,
  \tau}(x)$. We further assume that
$\kappa_{\min} \le \bar{\kappa}_{f,n}(x), \bar{\kappa}_i(x) \le
\kappa_{\max}$ for almost all $x$. Finally, we define
$\kappa_f := \overline{\kappa}_{f,\tau}d$ and
$\alpha_f := 2\overline{\kappa}_{f,n}/d$, where $d$ is the thickness
of the fault.

The porous media model with conducting fault $\Gamma_c$ (cf.
\cite{Martin:2005}) and sealing fault $\Gamma_s$
(cf. \cite{lee2022forward}), is now given by:
\begin{subequations}
  \label{eq:fault_problem}
  \begin{align}
    \label{eq:fault_problem_a}
    \kappa^{-1} u + \nabla p & =0 && \text{in } \mathring{\Omega},
    \\
    \label{eq:fault_problem_b}
    \nabla \cdot u &= g && \text{in } \mathring{\Omega},
    \\
    \label{eq:fault_problem_c}
    -\nabla_\tau \cdot  (\kappa_f\nabla_\tau p_f) &=g_f+\jump{u\cdot n} && \text{in } \Gamma_c,
    \\
    \label{eq:fault_problem_d}
    -\xi u_+ \cdot n_+ +\alpha_f p_+&=\alpha_f p_f -(1-\xi) u_- \cdot n_- && \text{in } \Gamma_c,
    \\
    \label{eq:fault_problem_e}
    -\xi u_- \cdot n_- +\alpha_f p_-&=\alpha_f p_f -(1-\xi) u_+ \cdot n_+ && \text{in } \Gamma_c,
    \\
    \label{eq:fault_problem_f}
    2\av{u \cdot n} &=\alpha_f \jump{p} && \text{in } \Gamma_s,
    \\
    \label{eq:fault_problem_g}
    \jump{u \cdot n} &= 0 && \text{in } \Gamma_s,
  \end{align}
\end{subequations} 
where $1/2 < \xi \le 1$ is a constant and where $\nabla_{\tau}$ and
$\nabla_{\tau}\cdot$ are the tangential gradient and divergence
operators along $\Gamma_c$.

The boundary of the conducting fault $\Gamma_c$ is partitioned into a
Neumann part and a Dirichlet part. These are denoted by
$(\partial\Gamma_c)^N$ and $(\partial\Gamma_c)^D$, respectively.  Let
$p^D$ be a given pressure on $\partial \Omega$, let $p_f^D$ be a given
pressure on $(\partial\Gamma_c)^D$, and let $g_f^N$ be given Neumann
boundary data on $(\partial\Gamma_c)^N$. We then consider the
following boundary conditions:
\begin{subequations}
  \label{eq:fault_problem_bcs}
  \begin{align}
    \label{eq:fault_problem_h}
    p &= p^D && \text{on }\partial\Omega,
    \\
    \label{eq:fault_problem_i}
    -\kappa_f \nabla_\tau p_f &= g^N_{f} && \text{on } (\partial\Gamma_c)^N,
    \\
    \label{eq:fault_problem_j}
    p_f&=p^D_{f} && \text{on } (\partial\Gamma_c)^D.
  \end{align}
\end{subequations} 

%------------------------------------------------------------------------------
\section{The discretization}
\label{s:hdgipdg}

Let $\mathcal{T}$ be a shape-regular (cf. \cite{brenner:book}
triangulation of the domain $\Omega$ into non-overlapping simplices
$K$ that aligns with the faults.  The set of all
$(\dim-1)$-dimensional faces of $\partial K$, for $K \in \mathcal{T}$,
is denoted by $\mathcal{F}$. We assume that the intersection of
$\Gamma_j$ and $\mathcal{F}$ gives a triangulation of $\Gamma_j$ for
$j=c,s$. For a $k$-dimensional simplex $S$, $h_S$ is the diameter of
the smallest $k$-dimensional ball containing $S$, and we define
$h := \max_{K \in \mathcal{T}} h_K$.  The outward unit normal vector
field on $\partial K$ for a simplex $K$ is denoted by $n_K$. However,
if $K$ is clear from the context, we will simply write $n$.

We split $\mathcal{F}$ into four sets; $\mathcal{F}_{b}$ for the
boundary faces, $\mathcal{F}_{c}^f$ and $\mathcal{F}_{s}^f$ for the
faces on the conducting and sealing faults, respectively, and
$\mathcal{F}_{0}$ for the interior faces that are not on the
faults. In other words,
\begin{align*}
  \mathcal{F}_{j}^f &:=\cbr[0]{ F \in \mathcal{F}\,:\, F \subset \Gamma_j}, \; j=s,c,
  &
    \mathcal{F}_{b} &:=\cbr[0]{ F \in \mathcal{F}\,:\, F \subset \partial \Omega },
  \\
  \mathcal{F}_0 &:= \mathcal{F} \setminus \del[0]{\mathcal{F}_c^f \cup \mathcal{F}_s^f \cup \mathcal{F}_{b}}.
  &&
\end{align*}
We will also denote the union of all faces in $\mathcal{F}_0$ by
$\Gamma_0 := \bigcup_{F\in \mathcal{F}_0}F$.  Let
$F^+,F^- \in \mathcal{F}_c^f$ be two adjacent faces on the conducting
fault $\Gamma_c$. These faces share $e$, an interior edge when
$\dim=3$ and a vertex when $\dim=2$. An edge/vertex of a face
$F \in \mathcal{F}_c^f$ that lies on $\partial \Gamma_c$ is considered
as a boundary edge/vertex.  The set of all interior edges/vertices on
the conducting fault $\Gamma_c$ is denoted by $\mathcal{E}_c^{f,int}$
while the sets of edges/vertices on $(\partial\Gamma_c)^N$ and
$(\partial\Gamma_c)^D$ are denoted by $\mathcal{E}_c^{f,N}$ and
$\mathcal{E}_c^{f,D}$, respectively. Furthermore, we define
$\mathcal{E}_c^{f,int,D} := \mathcal{E}_c^{f,int} \cup
\mathcal{E}_c^{f,D}$

The outward unit normal vector to a face $F\in \mathcal{F}_c^f$ on one
of its vertices ($\dim=2$) or edges ($\dim=3$) $e$ is denoted by
$n_e$, which reduces to $\pm 1$ if $\dim=2$. For
$e \in \mathcal{E}_c^{f,int}$ shared by faces
$F_+,F_- \in \mathcal{F}_c^f$, we define
\begin{equation}
  \label{eq:htilde-def}
  \tilde{h}_e :=
  \begin{cases}
    h_e & \text{if } \dim=3,
    \\
    \max(h_{F_+},h_{F_-}) & \text{if } \dim = 2.
  \end{cases}
\end{equation}
We define the jump and average operators across an interior edge $e$
shared by faces $F_+,F_- \in \mathcal{F}_c^f$ as
$\jump{pn}_e := p_+n_{e,+} + p_-n_{e,-}$ and
$\av{p}_e := (p_+ + p_-)/2$, respectively. Here $p_{\pm}$ is the trace
of $p|_{F_{\pm}}$ restricted to $e$. On a boundary edge
$e \in \partial \Gamma_c$ we define $\jump{pn}_e := pn_e$ and
$\av{p}_e := p$ with $n_e$ the outward unit normal vector on $e$ to
$\Gamma_c$.

For the discretization of the porous medium with faults problem
\cref{eq:fault_problem,eq:fault_problem_bcs} we consider the following
finite element spaces:
\begin{equation}
  \label{eq:fem_spaces}
  \begin{aligned}
    V_h &:= \cbr[0]{v_h \in [L^2(\Omega)]^{\dim} \,:\, v_h \in [P_k(K)]^{\dim} \ \forall K \in \mathcal{T}},
    \\
    Q_h &:= \cbr[0]{q_h \in L^2(\Omega) \,:\, q_h \in P_{k}(K) \ \forall K \in \mathcal{T}},
    \\
    Q_h^f &:= \cbr[0]{q_h^f \in L^2(\Gamma_c) \,:\, q_h^f \in P_{k_f}(F) \ \forall F \in \mathcal{F}_c^f}, \quad k_f=k,k+1,
    \\
    \bar{Q}_h &:= \cbr[0]{\bar{q}_h \in L^2((\cup_{F \in \mathcal{F}} F)\backslash \Gamma_c) \,:\, \bar{q}_h \in P_k(F) \ \forall F \in \mathcal{F}\backslash \mathcal{F}_c^f},
    \\
    \bar{Q}_h(w) &:= \cbr[0]{\bar{q}_h \in \bar{Q}_h\, :\, \bar{q}_h = \bar{\Pi}_hw \text{ on } \partial\Omega}, \forall w\in L^2(\partial \Omega),
    \\
    \boldsymbol{Q}_h(w) &:= Q_h \times \bar{Q}_h(w) \times Q_h^f,
  \end{aligned}
\end{equation}
where $P_l(R)$ denotes the space of polynomials of degree $l \ge 1$ on
a domain $R$ and $\bar{\Pi}_h$ is the $L^2$-projection into
$\cbr[0]{\bar{q}_h\in L^2(\partial \Omega): \bar{q}_h \in P_k(F)
  \quad\forall F\in \mathcal{F}_{\partial}}$ where
$\mathcal{F}_{\partial}$ is the set of all faces that lie on
$\partial \Omega$. Elements in $\boldsymbol{Q}_h(w) $ will be denoted
by $\boldsymbol{q}_h = (q,\bar{q}, q^f)$.

To write the coupled HDG/DG discretization we introduce bilinear forms
\begin{align*}
  &a_h(\cdot ,\cdot ) : V_h \times V_h \to \mathbb{R},
  & & b_h(\cdot, \cdot) : V_h \times \boldsymbol{Q}_h(w) \to \mathbb{R},
  \\
  &c_h^f(\cdot, \cdot): Q_h^f \times Q_h^f \to \mathbb{R},
  & & c_h(\cdot, \cdot): \boldsymbol{Q}_h(w) \times \boldsymbol{Q}_h(w) \to \mathbb{R}
\end{align*}
by
\begin{subequations}
  \begin{align}
    \label{eq:ah}
    a_h(w, v)
    :=&
        \sum_{K\in \mathcal{T}} \int_K \kappa^{-1} w \cdot v \dif x
        + \sum_{F \in \mathcal{F}_s^f} \int_{F} 2\alpha_f^{-1} \av{w\cdot n} \av{v \cdot n} \dif s
    \\
    \notag 
      & + \sum_{F \in \mathcal{F}_c^f} \int_{F} \alpha_f^{-1}\del[2]{(\xi-\tfrac{1}{2})\jump{w \cdot n}\jump{v \cdot n}
        + 2 \av{w \cdot n} \av{v \cdot n} } \dif s,
    \\
    \label{eq:bh}
    b_h(v, \boldsymbol{q})
    :=&
        - \sum_{K\in \mathcal{T}} \int_K q \nabla \cdot v  \dif x
        + \sum_{K\in \mathcal{T}} \int_{\partial K \backslash \Gamma_c} \bar{q} v \cdot n \dif s
        + \sum_{F \in \mathcal{F}_c^f} \int_{F} q_f \jump{v \cdot n} \dif s,
    \\
    \label{eq:ch-f}
    c_h^f(r, q)
    :=& \sum_{F \in \mathcal{F}_c^f} \int_F \kappa_f \nabla_{\tau} r \cdot \nabla_{\tau} q \dif s
        + \sum_{e \in \mathcal{E}_c^{f,int,D}} \int_e \frac{\sigma}{\tilde{h}_e} \av{\kappa_f}_e\jump{r}_e\jump{q}_e \dif l
    \\
    \notag 
      & - \sum_{e \in \mathcal{E}_c^{f,int,D}} \int_e \av{\kappa_f\nabla_{\tau}r}_e \cdot \jump{q n_e}_e \dif l
        - \sum_{e \in \mathcal{E}_c^{f,int,D}} \int_e \av{\kappa_f\nabla_{\tau}q}_e \cdot \jump{r n_e}_e \dif l,
    \\
    \label{eq:ch}
    c_h(\boldsymbol{r}, \boldsymbol{q})
    :=& c_h^f(r_f, q_f) + \sum_{K\in\mathcal{T}} \int_{\partial K \backslash (\Gamma_c \cup \Gamma_s)} \alpha (q - \bar{q}) (r-\bar{r} ) \dif s,
  \end{align}
\end{subequations}
where for any $K\in \mathcal{T}$, $\alpha\geq 0$ is piecewise constant
on each $F\subset\partial K$ such that $\alpha\neq 0$ on at least one
$F\subset\partial K$ and a linear functional
$f_h: Q_h \times Q_h^f \to \mathbb{R}$ is defined as 
\begin{equation}
  \label{eq:fh}
  \begin{split}
    f_h((q, q_{f}))
    :=& \sum_{K\in \mathcal{T}}\int_K g q \dif x + \sum_{F \in \mathcal{F}_c^f} \int_F g_f q_{f} \dif s
    \\
    & - \sum_{e \in \mathcal{E}_c^{f,N}} \int_e q_{f}g_f^N \cdot n_e \dif l
    + \sum_{e \in \mathcal{E}_c^{f,D}} \int_e (\frac{\sigma}{\tilde{h}_e} \kappa_f p_f^D q_{f}
    - \kappa_f\nabla_{\tau}q_{f}\cdot n_e p_f^D)\dif l.          
  \end{split}
\end{equation}
The coupled HDG/DG discretization for the fault problem
\cref{eq:fault_problem,eq:fault_problem_bcs} is now given by: Find
$(\boldsymbol{u}_h,\boldsymbol{p}_h) \in \boldsymbol{V}_h \times
\boldsymbol{Q}_h(p^D)$ such that
\begin{subequations}
  \label{eq:discrete_fault}
  \begin{align}
    \label{eq:discrete_fault_a}
    a_h({u}_h, {v}_h) + b_h({v}_h, \boldsymbol{p}_h)
    &= 0 && \forall {v}_h \in {V}_h,
    \\
    \label{eq:discrete_fault_b}
    -b_h({u}_h, \boldsymbol{q}_h) + c_h(\boldsymbol{p}_h, \boldsymbol{q}_h)
    &= f_h((q_{h}, q_{h,f})) && \forall \boldsymbol{q}_h \in \boldsymbol{Q}_h(0).
  \end{align}  
\end{subequations}
The following lemma shows that the discretization
\cref{eq:discrete_fault} is consistent.

\begin{lemma}[Consistency]
  \label{lem:consistency}
  Let $u \in [H^1(\mathring{\Omega})]^{\dim}$,
  $p\in H^1(\mathring{\Omega})$, and $p_f \in H^{s}(\Gamma_c)$ with
  $s > 3/2$ satisfy
  \cref{eq:fault_problem,eq:fault_problem_bcs}. Denote the average of
  $p$ on $\mathcal{F}_0 \cup \mathcal{F}_s^f$ by $\av{p}$.
  Then $u$
  and $\boldsymbol{p}:=(p,\av{p}, p_f)$ satisfy
  \cref{eq:discrete_fault}.
\end{lemma}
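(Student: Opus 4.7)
The plan is to substitute the exact solution $(u,\boldsymbol{p})$ with $\boldsymbol{p} = (p,\av{p},p_f)$ into each of \cref{eq:discrete_fault_a,eq:discrete_fault_b}, perform element-wise integration by parts to convert $\int_K p\nabla\cdot v_h$ and $\int_F \kappa_f\nabla_\tau p_f\cdot\nabla_\tau q_{h,f}$ into volume and trace terms, and then invoke the PDEs and interface conditions in \cref{eq:fault_problem,eq:fault_problem_bcs} to cancel the residual face contributions. Face sums will be organized according to the four face types $\mathcal F_0$, $\mathcal F_b$, $\mathcal F_s^f$, $\mathcal F_c^f$, and the algebraic identity $p_+ v_+\cdot n_+ + p_- v_-\cdot n_- = \av{p}\jump{v\cdot n} + \jump{p}\av{v\cdot n}$ will be used repeatedly to rewrite one-sided face traces in terms of jumps and averages.

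For \cref{eq:discrete_fault_a}, the $\int_K$ residual collapses by \cref{eq:fault_problem_a}, leaving only face contributions. On $\mathcal F_0$ and $\mathcal F_b$, the $H^1$-trace of $p$ is single-valued and coincides with $\bar p$, so the $-pv_h\cdot n$ and $\bar p v_h\cdot n$ contributions cancel. On $\mathcal F_s^f$, after splitting the two-sided trace of $p$ by the identity above, the $\av p\jump{v_h\cdot n}$ piece cancels $\bar p$, and the leftover $-\int_F\jump p\av{v_h\cdot n}$ is annihilated by the sealing-fault term of $a_h$ via \cref{eq:fault_problem_f}. On $\mathcal F_c^f$ both the $p_f\jump{v_h\cdot n}$ term of $b_h$ and the $\xi$-weighted plus $2\av{\cdot}\av{\cdot}$ terms of $a_h$ appear; taking the sum and the difference of \cref{eq:fault_problem_d,eq:fault_problem_e} gives $(\xi-\tfrac12)\jump{u\cdot n} = \alpha_f(\av p - p_f)$ and $\alpha_f\jump p = 2\av{u\cdot n}$, which are exactly what is needed for the $\jump{v_h\cdot n}$- and $\av{v_h\cdot n}$-coefficients to cancel identically.

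For \cref{eq:discrete_fault_b}, the HDG part $-b_h(u,\boldsymbol q_h)$ produces the volumetric source via \cref{eq:fault_problem_b}, while the $\bar q_h u\cdot n$ sum over $\partial K\setminus\Gamma_c$ vanishes: on $\mathcal F_0$ because $u\in[H^1(\mathring\Omega)]^{\dim}$ gives $\jump{u\cdot n}=0$, on $\mathcal F_s^f$ by \cref{eq:fault_problem_g}, and on $\mathcal F_b$ because $\bar q_h\in\bar Q_h(0)$. The stabilization $\alpha(p-\bar p)(q_h-\bar q_h)$ vanishes since $\bar p = p$ away from the faults. For $c_h^f(p_f,q_{h,f})$, face-wise integration by parts combined with \cref{eq:fault_problem_c} produces $\sum_F\int_F q_{h,f}(g_f+\jump{u\cdot n})$ plus edge traces $\av{\kappa_f\nabla_\tau p_f}_e\cdot\jump{q_{h,f}n_e}_e$, which cancel the matching term in $c_h^f$. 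Using $H^s$-regularity of $p_f$ (so that $\jump{p_f}_e=0$ and $\jump{p_f n_e}_e=0$ on interior edges) together with \cref{eq:fault_problem_i,eq:fault_problem_j} on boundary edges reproduces precisely the Neumann and Dirichlet contributions of $f_h$, while the $\jump{u\cdot n}$ piece on $\mathcal F_c^f$ cancels the corresponding term from $-b_h$.

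The main obstacle I anticipate is the bookkeeping on $\mathcal F_c^f$ in \cref{eq:discrete_fault_a}: both $a_h$ and $b_h$ contribute nontrivially there, and one must combine the two Robin-type conditions \cref{eq:fault_problem_d,eq:fault_problem_e} in exactly the right way so that the coefficients $\alpha_f^{-1}(\xi-\tfrac12)$ and $2\alpha_f^{-1}$ built into $a_h$ fall out naturally. Everything else reduces to standard DG-style integration by parts.
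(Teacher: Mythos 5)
Your proposal is correct and follows essentially the same route as the paper's proof: element-wise integration by parts on \cref{eq:fault_problem_a}, the identity $p_+v_+\cdot n_+ + p_-v_-\cdot n_- = \av{p}\jump{v\cdot n}+\jump{p}\av{v\cdot n}$ on the faults, the sum and difference of \cref{eq:fault_problem_d,eq:fault_problem_e} (yielding exactly $\alpha_f(\av{p}-p_f)=(\xi-\tfrac12)\jump{u\cdot n}$ and $\alpha_f\jump{p}=2\av{u\cdot n}$) together with \cref{eq:fault_problem_f}, and the standard SIPG consistency argument for $c_h^f$. The bookkeeping you flag as the main obstacle is handled in the paper in precisely the way you describe, so no gap remains.
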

\begin{proof}
  We will show that $R=0$, where
  \begin{equation*}
    R = a_h({u}, {v}_h) + b_h({v}_h, \boldsymbol{p})
    -b_h({u}, \boldsymbol{q}_h) + c_h(\boldsymbol{p}, \boldsymbol{q}_h)
    -f_h((q_{h}, q_{h,f})).
  \end{equation*}
  We first show that $a_h({u}, {v}_h) + b_h({v}_h, \boldsymbol{p})=0$.
  Multiply \cref{eq:fault_problem_a} by a test function
  $v_{h} \in V_h$, integrate over a cell $K \in \mathcal{T}$,
  integrate by parts, and sum over all simplices in $\mathcal{T}$:
  \begin{multline*}
    \sum_{K \in \mathcal{T}} \int_K \kappa^{-1} u \cdot v_{h} \dif x
    - \sum_{K \in \mathcal{T}} \int_K p \nabla \cdot v_{h} \dif x
    + \sum_{K \in \mathcal{T}} \int_{\partial K \backslash (\Gamma_c\cup \Gamma_s)} p v_{h} \cdot n \dif s
    \\
    +  \sum_{K\in \mathcal{T}} \int_{\partial K \cap (\Gamma_s\cup\Gamma_c)} p v_{h} \cdot n\dif s = 0.
  \end{multline*}
  On $\Gamma_s$ and $\Gamma_c$, using the identity
  $p_+ v_{h,+} \cdot n_+ + p_- v_{h,-} \cdot n_- =
  \av{p}\jump{v_h\cdot n} + \jump{p}\av{v_h\cdot n}$, we have
  \begin{multline}
    \label{eq:ab-form-consistency}
    \sum_{K \in \mathcal{T}} \int_K \kappa^{-1} u \cdot v_{h} \dif x
    - \sum_{K \in \mathcal{T}} \int_K p \nabla \cdot v_{h} \dif x
    + \sum_{K \in \mathcal{T}} \int_{\partial K \backslash (\Gamma_c\cup \Gamma_s)} p v_{h} \cdot n \dif s
    \\
    +  \int_{\Gamma_c\cup \Gamma_s} (\av{p}\jump{v_h\cdot n} + \jump{p}\av{v_h\cdot n}) \dif s = 0.
  \end{multline}
  By \cref{eq:fault_problem_f},
  \begin{align}
    \label{eq:Gamma_s_identity}
    \int_{\Gamma_s}  (\av{p}\jump{v_h\cdot n} + \jump{p}\av{v_h\cdot n}) \dif s
    = \int_{\Gamma_s} (\av{p}\jump{v_h\cdot n}+2\alpha_f^{-1} \av{u\cdot n} \av{v_h\cdot n} ) \dif s. 
  \end{align}
  Note that the sum and difference of \cref{eq:fault_problem_d} and
  \cref{eq:fault_problem_e} give, respectively,
  \begin{align*}
    (1-2\xi)\jump{u\cdot n} + 2\alpha_f \av{p} = 2\alpha_f p_f,
    \quad \alpha_f \jump{p} = 2\av{u\cdot n} \qquad \text{ on } \Gamma_c.
  \end{align*}
  From these identities,
  \begin{equation}
    \label{eq:Gamma_c_identity}
    \begin{split}
      \int_{\Gamma_c}  (\av{p}\jump{v_h\cdot n} + \jump{p}\av{v_h\cdot n}) \dif s  
      =& \sum_{F \in \mathcal{F}_c^f} \int_{F} \alpha_f^{-1} \del[2]{(\xi - \tfrac{1}{2})\jump{u\cdot n}\jump{v_h\cdot n}
        + 2\av{u\cdot n}\av{v_h\cdot n}} \dif s 
      \\    
      & + \int_{\Gamma_c} p_f \jump{v_h\cdot n} \dif s .         
    \end{split}
  \end{equation}
  Combining
  \cref{eq:ab-form-consistency,eq:Gamma_c_identity,eq:Gamma_s_identity},
  we obtain $a_h({u}, {v}_h) + b_h({v}_h, \boldsymbol{p}) = 0$.

  To show $R=0$ note that
  \begin{align}
    \label{eq:bc-form-consistency}
    -b_h(u, \boldsymbol{q}_h) + c_h(\boldsymbol{p}, \boldsymbol{q}_h)
    = - \sum_{F \in \mathcal{F}_c^f} \int_F \jump{u \cdot n} q_{h,f} \dif s
    + \sum_{K\in\mathcal{T}} \int_K g q \dif x + c_h^f(p_f, q_{h,f})
  \end{align}
  because $u\cdot n$ is single-valued on
  $\mathcal{F}_0 \cup \mathcal{F}_s^f$ by \cref{eq:fault_problem_g},
  and $\nabla \cdot u = g$ by \cref{eq:fault_problem_b}.  We also
  multiply \cref{eq:fault_problem_c} by a test function
  $q_{h,f} \in Q_h^f$, integrate over faces $F \in \mathcal{F}_c^f$,
  and sum over the faces to obtain
  \begin{equation*}
    -\sum_{F \in \mathcal{F}_c^f} \int_F \nabla_{\tau}\cdot (\kappa_f \nabla_{\tau} p_f) q_{h,f} \dif s
    = \sum_{F \in \mathcal{F}_c^f} \int_F g_f q_{h,f} \dif s
    + \sum_{F \in \mathcal{F}_c^f} \int_F \jump{u \cdot n} q_{h,f} \dif s.  
  \end{equation*}
  Recall the definition of $\tilde{h}_e$ in \eqref{eq:htilde-def}.
  Integrating by parts and using the symmetric interior penalty DG
  method (see, for example, \cite{Riviere:book}) we obtain:
  \begin{equation}
    \label{eq:c_f-consistency}
    \begin{split}
      c_h^f(p_f, q_{h,f})
      &= \sum_{F \in \mathcal{F}_c^f} \int_F \kappa_f \nabla_{\tau} p_f \cdot \nabla_{\tau} q_{h,f} \dif s
      + \sum_{e \in \mathcal{E}_c^{f,int,D}} \int_e \frac{\sigma}{\tilde{h}_e} \av{\kappa_f}\jump{p_f}\jump{q_{h,f}} \dif l
      \\
      & - \sum_{e \in \mathcal{E}_c^{f,int,D}} \int_e \av{\kappa_f\nabla_{\tau}p_f} \cdot \jump{q_{h,f} n_e} \dif l
      - \sum_{e \in \mathcal{E}_c^{f,int,D}} \int_e \av{\kappa_f\nabla_{\tau}q_{h,f}} \cdot \jump{p_fn_e} \dif l
      \\
      =& \sum_{F \in \mathcal{F}_c^f} \int_F g_f q_{h,f} \dif s
      + \sum_{F \in \mathcal{F}_c^f} \int_F \jump{u \cdot n} q_{h,f} \dif s
      - \sum_{e \in \mathcal{E}_c^{f,N}} \int_e q_{h,f}g_f^N \cdot n_e \dif l
      \\
      & + \sum_{e \in \mathcal{E}_c^{f,D}} \int_e (\frac{\sigma}{\tilde{h}_e} \kappa_f p_f^D q_{h,f}
      - \kappa_f\nabla_{\tau}q_{h,f}\cdot n_e p_f^D)\dif l.      
    \end{split}
  \end{equation}
  Combining \cref{eq:bc-form-consistency} and
  \cref{eq:c_f-consistency}, we conclude that $R=0$.
\end{proof}

%------------------------------------------------------------------------------
\section{Well-posedness of the discrete problem}
\label{s:wellposedness}

In this section we assume that $p^D=0$ on $\partial \Omega$ for
simplicity of presentation. For general boundary condition $p^D \ne 0$
we can recast \cref{eq:discrete_fault} to an equivalent problem
finding $(u_h, \boldsymbol{p}_h^0) \in V_h \times \boldsymbol{Q}_h(0)$
with the same bilinear forms but with modified right-hand sides. To
simplify notation in this section, we will write $\boldsymbol{Q}_h$
instead of $\boldsymbol{Q}_h(0)$.

For the analysis in this and the following sections we define the
following norms:
\begin{align*}
  \norm[0]{{v}}_{{V}_h}^2
  &:= \norm[0]{v}_{\Omega}^2 
    + \sum_{F\in \mathcal{F}_c^f}\del[2]{\norm[0]{\jump{v \cdot n}}_F^2 + \norm[0]{\av{v\cdot n}}_F^2}
    + \sum_{F\in \mathcal{F}_s^f}\norm[0]{\av{v\cdot n}}_F^2
  && \forall \boldsymbol{v} \in {V}_h,
  \\
  \norm[0]{q_{f}}_{{Q}_{h,f}}^2
  &:= \sum_{F \in \mathcal{F}_c^f} \norm[0]{\nabla_{\tau} q_{f}}^2_{F}
    + \sum_{e \in \mathcal{E}_c^{f,int,D}} \frac{1}{\tilde{h}_e} \norm[0]{\jump{q_{f}}}_e^2
  && \forall q_f \in Q_h^f + H^1(\Gamma_c).
\end{align*}

\begin{lemma}[Coercivity and boundedness of $a_h$]
  The bilinear form $a_h$ is bounded and coercive on $V_h$, that is,
  \begin{subequations}
    \begin{align}
      \label{eq:ah-coercive}
      a_h({v}_h, {v}_h)
      &\ge \min \cbr[0]{\kappa_{\max}^{-1},\alpha_f^{-1}(\xi-\tfrac{1}{2})} \norm[0]{{v}_h}_{{V}_h}^2
      && \forall {v}_h \in {V}_h,
      \\ 
      \label{eq:ah-bounded}
      a_h({u}_h,{v}_h)
      &\le \max\cbr[0]{\kappa_{\min}^{-1}, 2\alpha_f^{-1}} \norm[0]{{u}_h}_{{V}_h} \norm[0]{{v}_h}_{{V}_h}
      && \forall {u}_h,{v}_h \in {V}_h.
    \end{align}    
  \end{subequations}
\end{lemma}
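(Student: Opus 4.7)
Both inequalities follow directly from the structure of $a_h$ once one observes that each of the three kinds of contribution in \cref{eq:ah} is already a square (resp.\ product) of precisely the norm pieces appearing in $\|\cdot\|_{V_h}$. So the plan is essentially a term-by-term matching of coefficients, followed by a discrete Cauchy--Schwarz at the end to assemble the sums.

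For coercivity \cref{eq:ah-coercive}, I would first use the assumption that $\kappa$ is diagonal with eigenvalues in $[\kappa_{\min},\kappa_{\max}]$, so that $\kappa^{-1}$ has eigenvalues in $[\kappa_{\max}^{-1},\kappa_{\min}^{-1}]$, to bound the volume part from below by $\kappa_{\max}^{-1}\sum_K \|v_h\|_K^2 = \kappa_{\max}^{-1}\|v_h\|_\Omega^2$. The sealing-fault sum is $2\alpha_f^{-1}\sum_{F\in\mathcal{F}_s^f}\|\av{v_h\cdot n}\|_F^2$ as written. For the conducting-fault sum, since $\xi>1/2$ the jump-jump term is non-negative and contributes $(\xi-\tfrac12)\alpha_f^{-1}\sum_{F\in\mathcal{F}_c^f}\|\jump{v_h\cdot n}\|_F^2$, while the average-average term contributes $2\alpha_f^{-1}\sum_{F\in\mathcal{F}_c^f}\|\av{v_h\cdot n}\|_F^2$. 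Since $\xi\le 1$ we have $\xi-\tfrac12\le\tfrac12<2$, so $\alpha_f^{-1}(\xi-\tfrac12)\le 2\alpha_f^{-1}$, and taking the minimum over $\kappa_{\max}^{-1}$ and $\alpha_f^{-1}(\xi-\tfrac12)$ produces the stated constant.

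For boundedness \cref{eq:ah-bounded}, I would apply Cauchy--Schwarz to each of the three contributions separately. The volume integral is controlled by $\kappa_{\min}^{-1}\|u_h\|_K\|v_h\|_K$. Each sealing-fault integral is controlled by $2\alpha_f^{-1}\|\av{u_h\cdot n}\|_F\|\av{v_h\cdot n}\|_F$, and on the conducting faults the jump-jump term is at most $\alpha_f^{-1}(\xi-\tfrac12)\|\jump{u_h\cdot n}\|_F\|\jump{v_h\cdot n}\|_F \le 2\alpha_f^{-1}\|\jump{u_h\cdot n}\|_F\|\jump{v_h\cdot n}\|_F$ by the same inequality $\xi-\tfrac12<2$, while the average-average term gives $2\alpha_f^{-1}\|\av{u_h\cdot n}\|_F\|\av{v_h\cdot n}\|_F$. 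Pulling out the common constant $M:=\max\{\kappa_{\min}^{-1},2\alpha_f^{-1}\}$ in front of every term and then applying the discrete Cauchy--Schwarz inequality to the sum over $K$ and faces recognises the resulting expression as $M\|u_h\|_{V_h}\|v_h\|_{V_h}$.

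There is no real obstacle here: the norm $\|\cdot\|_{V_h}$ was chosen precisely so that the coercivity of $a_h$ is immediate from the positivity of the coefficients $\kappa^{-1}$, $\alpha_f^{-1}$, and $\xi-\tfrac12$. The only point that warrants a line of comment is the use of $\xi\le 1$ to absorb the jump-jump coefficient $\alpha_f^{-1}(\xi-\tfrac12)$ into the uniform bound $2\alpha_f^{-1}$ (for boundedness) and to verify that the coercivity constant is indeed $\min\{\kappa_{\max}^{-1},\alpha_f^{-1}(\xi-\tfrac12)\}$ rather than involving the larger $2\alpha_f^{-1}$.
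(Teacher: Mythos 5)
Your proposal is correct and matches the paper's argument, which the authors state in one line: coercivity is immediate from the definitions of $a_h$ and $\norm[0]{\cdot}_{V_h}$, and boundedness additionally uses the Cauchy--Schwarz inequality and $\xi\le 1$ to absorb the jump coefficient $\alpha_f^{-1}(\xi-\tfrac12)$ into $2\alpha_f^{-1}$. Your term-by-term coefficient matching is exactly the intended expansion of that remark.
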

\begin{proof}
  \Cref{eq:ah-coercive} is straightforward by the definitions of $a_h$
  and $\norm[0]{\cdot}_{V_h}$. The proof of \cref{eq:ah-bounded} in
  addition uses the Cauchy--Schwarz inequality and the assumption that
  $\xi \leq 1$.
\end{proof}

\begin{lemma}[$c_h$ is positive semi-definite]
  \label{lem:ch-coercive}
  For sufficiently large  penalty parameter $\sigma>0$,
  \begin{equation*}
    c_h(\boldsymbol{q}_h,\boldsymbol{q}_h)
    \ge C \kappa_{\min} \norm[0]{q_{h,f}}_{Q_{h,f}}^2
    + \alpha \sum_{K \in \mathcal{T}} \norm[0]{ q_{h}-\bar{q}_{h} }_{\partial K \setminus (\Gamma_c \cup \Gamma_s)}^2
    \quad \forall \boldsymbol{q}_h \in \boldsymbol{Q}_h,
  \end{equation*}
  where $C$ depends on a discrete trace inequality constant and the
  penalty parameter $\sigma$.
\end{lemma}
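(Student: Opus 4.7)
The plan is to split $c_h$ into its two additive pieces and bound them separately. By the definition of $c_h$,
\[
c_h(\boldsymbol{q}_h,\boldsymbol{q}_h) = c_h^f(q_{h,f},q_{h,f}) + \sum_{K\in\mathcal{T}} \int_{\partial K\setminus(\Gamma_c\cup\Gamma_s)} \alpha\,(q_h-\bar{q}_h)^2 \dif s,
\]
so the HDG stabilization piece already matches, with equality, the second term on the right-hand side of the claim. All the work is therefore to establish the SIPDG-style coercivity estimate $c_h^f(q_{h,f},q_{h,f}) \ge C\kappa_{\min}\norm[0]{q_{h,f}}_{Q_{h,f}}^2$ for $\sigma$ sufficiently large; this is a standard result which I would reprove, adapted to our $(\dim-1)$-dimensional fault setting.

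First I would use the symmetry of $c_h^f$ to collapse the two consistency terms and write
\[
c_h^f(q_{h,f},q_{h,f}) = \sum_{F\in\mathcal{F}_c^f}\int_F \kappa_f |\nabla_\tau q_{h,f}|^2\dif s + \sum_{e\in\mathcal{E}_c^{f,int,D}} \frac{\sigma}{\tilde{h}_e}\int_e \av{\kappa_f}_e \jump{q_{h,f}}_e^2\dif l - 2\sum_{e\in\mathcal{E}_c^{f,int,D}}\int_e \av{\kappa_f\nabla_\tau q_{h,f}}_e\cdot\jump{q_{h,f} n_e}_e\dif l.
\]
The first two sums are non-negative, and by the uniform assumption on $\bar{\kappa}_{f,\tau}$ (so that $\kappa_f = \bar{\kappa}_{f,\tau}\,d \ge d\kappa_{\min}$ and $\av{\kappa_f}_e\ge d\kappa_{\min}$) their sum is bounded below by a constant times $\kappa_{\min}\norm[0]{q_{h,f}}_{Q_{h,f}}^2$. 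The hard part is the indefinite third term, which must be controlled without destroying this lower bound.

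The main obstacle is therefore the consistency term. I would treat each edge contribution by Cauchy--Schwarz, splitting the integrand as $\tilde{h}_e^{1/2}\av{\kappa_f\nabla_\tau q_{h,f}}_e$ against $\tilde{h}_e^{-1/2}\jump{q_{h,f} n_e}_e$, and then apply Young's inequality with a parameter $\epsilon>0$. The factor $\tilde{h}_e\,\norm[0]{\av{\kappa_f\nabla_\tau q_{h,f}}}_e^2$ is then controlled by a discrete trace inequality (valid for polynomials of degree $k_f$ on shape-regular simplices) by $C_{tr}\kappa_{\max}\sum_{F\supset e}\norm[0]{\nabla_\tau q_{h,f}}_F^2$. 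Summed over edges, this is absorbed into the $\int_F \kappa_f|\nabla_\tau q_{h,f}|^2$ term provided $\epsilon$ is chosen small enough, while the remaining $\epsilon^{-1}$-weighted jump term is dominated by the penalty provided $\sigma$ exceeds a threshold of the form $C(\kappa_{\max}/\kappa_{\min})\,C_{tr}/\epsilon$. A positive fraction of each non-negative term then survives, yielding the claimed estimate with a constant $C$ that depends on $C_{tr}$ and on $\sigma$, exactly as stated.
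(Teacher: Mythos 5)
Your proof is correct and follows essentially the same route as the paper: the identical additive decomposition of $c_h$ into $c_h^f$ plus the HDG stabilization term (which matches the second term of the claim with equality), combined with coercivity of the symmetric interior penalty form $c_h^f$ for sufficiently large $\sigma$. The only difference is that the paper simply cites this standard SIPDG coercivity result (\cite[Lemma~4.12]{Pietro:book}) while you reprove it via Cauchy--Schwarz, a discrete trace inequality, and Young's inequality, which is exactly the argument behind the cited lemma.
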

\begin{proof}
  First note that for sufficiently large $\sigma>0$ we
  have (see, for example, \cite[Lemma 4.12]{Pietro:book}),
  \begin{equation*}
    c_h^f(q_{h,f}, q_{h,f}) \ge C\kappa_{\min} \norm[0]{q_{h,f}}_{Q_{h,f}}^2.
  \end{equation*}
  The result follows noting that
  \begin{equation*}
    c_h(\boldsymbol{q}_h,\boldsymbol{q}_h)
    = c_h^f(q_{h,f}, q_{h,f})
    + \sum_{K \in \mathcal{T}} \int_{\partial K\setminus (\Gamma_c \cup \Gamma_s)} \alpha (q_{h}-\bar{q}_{h})^2 \dif s.
  \end{equation*}
\end{proof}

The following result, proven in \cite[Proposition~2.1]{Cockburn:2008},
will be used to prove \Cref{thm:wellposedness}, i.e., the
well-posedness of \cref{eq:discrete_fault}.

\begin{lemma}
  \label{lem:Pitilde}
  Suppose that $w \in [H^1(K)]^{\rm dim}$ for a simplex
  $K \in \mathcal{T}$. For a fixed face $F$ of $K$, there exists a
  unique linear interpolation
  $\tilde{\Pi}_K^F:[H^1(K)]^{\rm dim} \rightarrow [P_k(K)]^{\rm dim}$
  such that
  \begin{subequations}
    \label{eq:Pitilde-def-eqs}
    \begin{align}
      \label{eq:Pitilde-def-eq1}
      \int_K (\tilde{\Pi}^F_K w - w) \cdot v \dif x &= 0,
      & & \forall v \in [P_{k-1}(K)]^{\rm dim},\ k \ge 1, 
      \\
      \label{eq:Pitilde-def-eq2}
      \int_{F'}  (\tilde{\Pi}^F_K w - w) \cdot n r \dif s &= 0,
      & & \forall r \in P_k(F'),\ \forall \text{ face } F' \subset \partial K,\ F' \ne F. 
    \end{align}
  \end{subequations}
  Moreover, for the same $F$ and $\tilde{\Pi}^F_K$, if
  $w \in [H^{s+1}(K)]^{\rm dim}$, $0 \le s \le k$, then
  \begin{subequations}        
    \label{eq:Pitilde-approx-ineqs}
    \begin{align}
      \label{eq:Pitilde-approx-ineq1}
      \norm[0]{ \tilde{\Pi}_K^F w \cdot n - P_F w \cdot n }_F & \le C h_K^{s+1/2} \norm[0]{ P_K \nabla \cdot w }_{H^s(K)}, 
      \\
      \label{eq:Pitilde-approx-ineq2}
      \norm[0]{ \tilde{\Pi}_K^F w - w }_K & \le C h_K^{s+1} \norm[0]{ w }_{H^{s+1}(K)},
    \end{align}
  \end{subequations}
  where $P_F$ is the $L^2$-projection into $P_k(F)$ and $P_K$ is the
  $L^2$-projection into $P_k(K)$.
\end{lemma}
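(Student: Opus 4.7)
The plan is to reduce existence and uniqueness of $\tilde{\Pi}_K^F$ to a dimension count plus an injectivity argument, and then derive the two approximation estimates by an integration-by-parts identity that transfers the error on the untested face $F$ into an interior moment, followed by standard Bramble--Hilbert scaling on the reference simplex.

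For existence and uniqueness, I would first verify that the number of linear constraints in \cref{eq:Pitilde-def-eqs} matches the dimension of $[P_k(K)]^{\dim}$: the $\dim$ faces other than $F$ contribute $\dim\cdot\binom{k+\dim-1}{\dim-1}$ moment conditions, the interior moments contribute $\dim\cdot\binom{k-1+\dim}{\dim}$, and Pascal's identity makes their sum equal to $\dim\cdot\binom{k+\dim}{\dim}$. Hence it suffices to show injectivity: if $z\in[P_k(K)]^{\dim}$ satisfies the homogeneous versions of \cref{eq:Pitilde-def-eq1,eq:Pitilde-def-eq2}, then $z\equiv 0$. For this step, one uses a polynomial decomposition together with a careful choice of test polynomials in $P_k(K)$ that exploits the $\dim$ tested faces; the detailed argument is in \cite[Proposition~2.1]{Cockburn:2008}.

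For the approximation estimates, the key observation is that for any $q\in P_k(K)$, integration by parts together with \cref{eq:Pitilde-def-eq1} (applied to $\nabla q\in[P_{k-1}(K)]^{\dim}$) and \cref{eq:Pitilde-def-eq2} (applied on the faces $F'\neq F$) collapses all but one boundary term into
\[
  \int_F (\tilde{\Pi}_K^F w - w)\cdot n\, q \dif s = \int_K \nabla\cdot(\tilde{\Pi}_K^F w - w)\, q \dif x.
\]
Setting $\rho := \tilde{\Pi}_K^F w\cdot n - P_F w\cdot n \in P_k(F)$ and choosing $q\in P_k(K)$ to be a polynomial lifting with $q|_F=\rho$, the left-hand side equals $\norm[0]{\rho}_F^2$. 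Applying the same identity with $r\in P_k(K)$ then identifies $\nabla\cdot\tilde{\Pi}_K^F w$ with $P_K\nabla\cdot w$ up to a face remainder controlled by $\norm[0]{\rho}_F$, and Cauchy--Schwarz, a discrete trace inequality, and a Bramble--Hilbert estimate on the reference simplex close \cref{eq:Pitilde-approx-ineq1}. Estimate \cref{eq:Pitilde-approx-ineq2} then follows by noting that uniqueness forces $\tilde{\Pi}_K^F$ to preserve $[P_k(K)]^{\dim}$, pulling back to the reference element where the projection has uniformly bounded operator norm, and invoking Bramble--Hilbert.

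The delicate step is the injectivity claim, since $F$ is not directly tested and one cannot integrate by parts against an arbitrary polynomial to isolate $z\cdot n|_F$. The resolution relies on a polynomial decomposition that separates divergence and trace information so that the normal trace on $F$ is recovered from the remaining data by virtue of exactly one face being excluded. Once this is in place, the dimension count, the integration-by-parts collapse, and the reference-element scaling are all routine.
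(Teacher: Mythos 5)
The first thing to note is that the paper does not prove this lemma at all: it is quoted as a black box from \cite[Proposition~2.1]{Cockburn:2008}, so your proposal is a reconstruction of the reference's argument rather than a competitor to an in-paper proof. Your skeleton for unisolvence is correct: the count $\dim\binom{k+\dim-1}{\dim-1}+\dim\binom{k+\dim-1}{\dim}=\dim\binom{k+\dim}{\dim}$ checks out, and deferring the injectivity of the resulting square homogeneous system to the reference is legitimate (it is exactly what the paper does for the whole lemma). The integration-by-parts collapse
\[
\int_F(\tilde{\Pi}_K^F w - w)\cdot n\,q\dif s=\int_K\nabla\cdot(\tilde{\Pi}_K^F w-w)\,q\dif x,\qquad q\in P_k(K),
\]
is the right tool, and your route to \cref{eq:Pitilde-approx-ineq2} (polynomial invariance by unisolvence, bounded operator norm on the reference simplex, Bramble--Hilbert) is standard and sound, modulo the usual care with how normal traces transform under the affine map.

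The genuine gap is in your derivation of \cref{eq:Pitilde-approx-ineq1}: the chain you describe is circular. Set $\rho=\tilde{\Pi}_K^Fw\cdot n-P_Fw\cdot n$ and $\delta=\nabla\cdot\tilde{\Pi}_K^Fw-P_K\nabla\cdot w$. Your two applications of the identity give $\norm[0]{\rho}_F^2=\int_K\delta\, q\dif x$ with a lifting satisfying $\norm[0]{q}_K\lesssim h_K^{1/2}\norm[0]{\rho}_F$, and $\norm[0]{\delta}_K^2=\int_F\rho\,\delta\dif s\lesssim h_K^{-1/2}\norm[0]{\rho}_F\norm[0]{\delta}_K$. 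Combining these with Cauchy--Schwarz and the discrete trace inequality, as you propose, the powers of $h_K$ cancel exactly and you land at $\norm[0]{\rho}_F\lesssim\norm[0]{\rho}_F$; no bound in terms of $P_K\nabla\cdot w$ emerges and nothing can be absorbed. The missing idea is to first show that $\nabla\cdot\tilde{\Pi}_K^Fw$ itself is controlled by $P_K\nabla\cdot w$: test the identity with $q=\lambda_F r$, $r\in P_{k-1}(K)$, where $\lambda_F$ is the barycentric coordinate vanishing on $F$, so that the face term drops and $\int_K(\nabla\cdot\tilde{\Pi}_K^Fw)\lambda_Fr\dif x=\int_K(P_K\nabla\cdot w)\lambda_Fr\dif x$; taking $r=\nabla\cdot\tilde{\Pi}_K^Fw$ and using the norm equivalence $\norm[0]{\lambda_F^{1/2}v}_K\simeq\norm[0]{v}_K$ on $P_{k-1}(K)$ yields $\norm[0]{\delta}_K\le C\norm[0]{P_K\nabla\cdot w}_K$, after which $\norm[0]{\rho}_F\le Ch_K^{1/2}\norm[0]{\delta}_K$ closes the $s=0$ case; in particular $\rho$ vanishes whenever $P_K\nabla\cdot w$ does. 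The higher-order rates then come from applying the Bramble--Hilbert argument to the dependence of $\rho$ on the divergence, not on $w$ itself---otherwise you would obtain $\norm[0]{w}_{H^{s+1}(K)}$ on the right-hand side instead of the sharper $\norm[0]{P_K\nabla\cdot w}_{H^s(K)}$ that the well-posedness and $L^2$ pressure arguments in \cref{thm:wellposedness} and \cref{thm:errorestimates} actually rely on.
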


\begin{theorem}[Well-posedness]
  \label{thm:wellposedness}
  Suppose that each simplex $K \in \mathcal{T}$ has a face in
  $\mathcal{F}_0$ and each connected component of
  $\Omega \setminus \Gamma_c$ has a part of its boundary intersecting
  with $\partial \Omega$ with positive $(\dim-1)$-dimensional Lebesgue
  measure.  For sufficiently large penalty parameter $\sigma$, the
  discrete problem \cref{eq:discrete_fault} is well-posed.
\end{theorem}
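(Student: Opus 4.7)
Since \eqref{eq:discrete_fault} is a square finite-dimensional linear system, well-posedness reduces to uniqueness: it suffices to show that if $f_h\equiv 0$ and the Dirichlet data vanish, then $u_h=0$ and $\boldsymbol p_h=0$. The plan has two layers. First, an energy argument extracts $u_h=0$ and pins down $p_{h,f}$ together with the ``facet jumps'' $q_h-\bar q_h$. Second, the resulting identity $b_h(v_h,\boldsymbol p_h)=0$ for all $v_h\in V_h$ is exploited through carefully chosen test functions built from the HDG projection of \cref{lem:Pitilde} to conclude $p_h=0$ and $\bar p_h=0$.

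For the first layer, set $v_h=u_h$ and $\boldsymbol q_h=\boldsymbol p_h$ and add \eqref{eq:discrete_fault_a} to \eqref{eq:discrete_fault_b}; the $b_h$ contributions cancel and we are left with $a_h(u_h,u_h)+c_h(\boldsymbol p_h,\boldsymbol p_h)=0$. The coercivity of $a_h$ (which uses $\xi>1/2$ and $\alpha_f>0$) gives $u_h=0$. The positive semi-definiteness result (\cref{lem:ch-coercive}), valid for $\sigma$ large enough, then yields $\|p_{h,f}\|_{Q_{h,f}}=0$ and $\alpha\,(q_h-\bar q_h)\equiv 0$ on each $\partial K\setminus(\Gamma_c\cup\Gamma_s)$ where $\alpha\neq 0$. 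The norm $\|\cdot\|_{Q_{h,f}}$ controls both the tangential gradient on each $F\in\mathcal F_c^f$ and the jumps over $\mathcal E_c^{f,int,D}$; since $(\partial\Gamma_c)^D$ is non-empty (which we may assume; otherwise the pure-Neumann case is handled by quotienting by constants), the standard discrete Poincaré estimate for the IPDG norm delivers $p_{h,f}\equiv 0$.

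For the second layer, with $u_h=0$ and $p_{h,f}=0$ equation \eqref{eq:discrete_fault_a} collapses to $b_h(v_h,\boldsymbol p_h)=0$ for every $v_h\in V_h$. Fix $K\in\mathcal T$ and, using the hypothesis, a face $F_0\in\mathcal F_0$ of $K$. Following \cite{Cockburn:2008}, apply the local projection $\tilde\Pi_K^{F_0}$ of \cref{lem:Pitilde} to functions $w$ supported in $K$ and integrate element-wise by parts in $b_h$. Condition \eqref{eq:Pitilde-def-eq1} kills the volume residual against $\nabla p_h|_K\in[P_{k-1}(K)]^{\dim}$, and \eqref{eq:Pitilde-def-eq2} kills the trace residuals against $(p_h-\bar p_h)|_{F'}$ for all faces $F'\neq F_0$ of $K$. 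Choosing $w=\nabla p_h|_K$ (suitably extended) first yields $\nabla p_h=0$ on $K$, hence $p_h$ is piecewise constant. A second choice of $w$ whose trace isolates the unique remaining face $F_0$ then gives $p_h|_K=\bar p_h|_{F_0}$; when $F_0\in\mathcal F_s^f$ one additionally uses the average term in $a_h$ (now $a_h(0,v_h)+b_h(v_h,\boldsymbol p_h)=0$) to control $\av{p_h}$ versus $\bar p_h$, and on $\mathcal F_c^f$ the jump test against $p_{h,f}=0$ reduces the fault contribution to $p_h$ alone. Finally, using that $\bar p_h=0$ on $\partial\Omega$ and that every connected component of $\Omega\setminus\Gamma_c$ touches $\partial\Omega$, a propagation argument along interior facets in $\mathcal F_0$ transports the zero value from the boundary through all elements, giving $p_h\equiv 0$ and consequently $\bar p_h\equiv 0$.

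The main obstacle is the second layer: setting up the test-function construction so that the sealing and conducting fault contributions in $b_h$ and $a_h$ decouple cleanly from the HDG skeleton terms and do not spoil the projection identities of \cref{lem:Pitilde}. The delicate book-keeping is that $\bar p_h$ lives only on $\mathcal F\setminus\mathcal F_c^f$, so on $\mathcal F_c^f$ the trace term is replaced by $p_{h,f}$ (already shown to vanish), while on $\mathcal F_s^f$ one must track both $\jump{v_h\cdot n}$ and $\av{v_h\cdot n}$; verifying that the resulting propagation of ``$p_h=\bar p_h$'' indeed crosses interior faces and reaches the Dirichlet boundary in each connected component of $\Omega\setminus\Gamma_c$ is the crucial step, and is exactly why the two geometric hypotheses in the theorem statement are needed.
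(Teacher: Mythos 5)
Your proposal follows essentially the same route as the paper: reduce to uniqueness, use the energy identity $a_h(u_h,u_h)+c_h(\boldsymbol p_h,\boldsymbol p_h)=0$ with coercivity of $a_h$ and semi-definiteness of $c_h$ to get $u_h=0$, $p_{h,f}=0$ and $p_h=\bar p_h$ on the stabilized facets, then exploit $b_h(v_h,\boldsymbol p_h)=0$ with test functions built from $\tilde\Pi_K^F$ of \cref{lem:Pitilde} to obtain $\nabla p_h=0$ elementwise, and finally propagate the constants to the Dirichlet boundary through each connected component of $\Omega\setminus\Gamma_c$. This is the paper's argument in all its main steps.

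One step in your second layer is off, though it is repairable. To show $p_h=\bar p_h$ on a sealing face $F_s\in\mathcal F_s^f$ you appeal to ``the average term in $a_h$'' via the identity $a_h(0,v_h)+b_h(v_h,\boldsymbol p_h)=0$; but once $u_h=0$ the form $a_h(u_h,v_h)$ vanishes identically, so there is nothing in $a_h$ left to use. The $\alpha$-stabilization in $c_h$ also excludes $\Gamma_s$, so the energy identity gives no information there either. The correct mechanism, as in the paper, lives entirely in $b_h$: since $\bar q_h$ is defined on $\partial K\setminus\Gamma_c$ (hence on $\Gamma_s$), one chooses a $v_h$ supported on the two simplices $K_1,K_2$ adjacent to $F_s$ whose normal trace equals $-p_h|_{K_i}+\bar p_h$ on $F_s$ and vanishes on all other faces (a local BDM-type construction); combined with $\nabla p_h=0$ elementwise, the identity $b_h(v_h,\boldsymbol p_h)=0$ then yields $\|p_h-\bar p_h\|_{F_s}^2=0$ from each side. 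Also note that your phrasing conflates $F_0\in\mathcal F_0$ with faces on $\mathcal F_s^f$; the element-local projection argument is only run on the distinguished face in $\mathcal F_0$ guaranteed by the hypothesis, and the sealing faces are treated by the separate test function above. With that correction the propagation argument you describe closes the proof exactly as in the paper.
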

\begin{proof}
  To show well-posedness of the finite-dimensional linear problem
  \cref{eq:discrete_fault}, it is sufficient to show uniqueness. For
  this, let $g_f=0$ in $\Gamma_c$, $g=0$ in $\mathring{\Omega}$,
  $p_f^D=0$ on $(\partial \Gamma_c)^D$, and $g_f^N=0$ on
  $(\partial \Gamma_c)^N$, choose ${v}_h={u}_h$ and
  $\boldsymbol{q}_h=\boldsymbol{p}_h$ in \cref{eq:discrete_fault}, and
  add \cref{eq:discrete_fault_a,eq:discrete_fault_b} to find
  $a_h({u}_h,{u}_h) + c_h(\boldsymbol{p}_h,\boldsymbol{p}_h) = 0$. The
  coercivity
  $a_h(u_h,u_h) \ge \kappa_{\max}^{-1} \norm[0]{u_h}_{\Omega}^2$ by
  \cref{eq:ah-coercive} and positive semi-definiteness of $c_h$ by
  \cref{lem:ch-coercive} immediately imply that $u_h =0$ in $\Omega$,
  that $p_{h,f}=0$ in $\Gamma_c$, that $p_{h} = \bar{p}_{h}$ for all
  $F \in \mathcal{F}_0$ and since we assumed $p^D=0$, that
  $p_h = \bar{p}_h = 0$ for all $F \in \mathcal{F}_b$. We are
  therefore left to show that $p_{h} = 0$ in $\Omega$ and
  $\bar{p}_{h} = 0$ on all $F \in \mathcal{F}_0 \cup \mathcal{F}_s^f$.
  To show this, first observe that since $u_h = 0$ and $p_{h,f} = 0$
  after integrating by parts in \cref{eq:discrete_fault_a}, we obtain
  \begin{equation}
    \label{eq:intermediate}
    \sum_{K\in \mathcal{T}} \int_K \nabla p_h  \cdot v_h  \dif x
    -\sum_{K\in \mathcal{T}} \int_{\partial K} p_h   v_h\cdot n  \dif s
    + \sum_{K\in \mathcal{T}} \int_{\partial K \backslash \Gamma_c} \bar{p}_h v_h \cdot n \dif s = 0 
    \quad \forall {v}_h \in {V}_h.
  \end{equation}
  Next, we let $K \in \mathcal{T}$, and let $F \subset \partial K$ be
  a face such that $F \in \mathcal{F}_0$. By the existence of
  $\tilde{\Pi}_K^F$ in \cref{lem:Pitilde}, there exists
  $v_h^K \in [P_k(K)]^{\dim}$ such that $v_h^K \cdot n = 0$ on any
  face $F' \subset \partial K$, $F' \ne F$, and
  \begin{equation*}
    \int_K v_h^K \cdot w_h \dif x = \int_K \nabla p_{h} \cdot w_h \dif x
    \quad \forall w_h \in [P_{k-1}(K)]^{\dim}.
  \end{equation*}
  Given $K \in \mathcal{T}$ choose $v_h\in V_h$ such that
  \begin{equation}
    \label{eq:well-posed-testfunction}
    v_h = 
    \begin{cases}
      v_h^K & \text{on } K,
      \\
      0 & \text{otherwise}.
    \end{cases}
  \end{equation}
  Choosing $v_h$ in \cref{eq:intermediate} as described by
  \cref{eq:well-posed-testfunction} and using that
  $p_{h} = \bar{p}_{h}$ for all
  $F \in \mathcal{F}_0 \cup \mathcal{F}_b$, we find
  \begin{equation}
    \label{eq:grad_p-vanish}
    b_h(v_h, \boldsymbol{p}_h)
    = \int_K \nabla p_{h} \cdot \nabla p_{h} \dif x = 0. 
  \end{equation}
  Therefore, $\nabla p_h=0$ on $K$ which implies that $p_{h}$ is a
  constant on $K$. Since $K \in \mathcal{T}$ is arbitrary, $p_h$ is
  element-wise constant on $\Omega$.
  
  Finally, let $F_s \in \mathcal{F}_s^f$. Then,
  $F_s = \partial K_1 \cap \partial K_2$ for some
  $K_1,K_2\in \mathcal{T}$. This time we define $v_h \in V_h$ such
  that it vanishes on $\Omega \setminus (K_1 \cup K_2)$ and such that
  its normal component takes the following facial values as a function
  on $K_i$:
  \begin{equation}
    \label{eq:well-posed-testfunction2}
    v_h\cdot n|_{F} = 
    \begin{cases}
      -p_h|_{K_i} + \bar{p}_h \quad &\text{ if } F = F_s
      \\
      0 \quad &\text{ if } F \not = F_s        
    \end{cases} \qquad \text{ for }i=1,2,
  \end{equation}
  and all other local BDM degrees of freedom vanish. Using this $v_h$
  in \cref{eq:intermediate} together with the fact that
  $\nabla p_h = 0$ element-wise on $\Omega$, we obtain that
  $p_h = \bar{p}_h$ on $F_s$. Since $F_s \in \mathcal{F}_s^f$ is
  arbitrary, we conclude that $p_h = \bar{p}_h$ on
  $\Gamma_s$. Furthermore, since $p_h$ is constant on every
  $K \in \mathcal{T}$, using that $p_{h}=\bar{p}_{h}$ on all faces in
  $\mathcal{F}_0 \cup \mathcal{F}_s^f$ we conclude that $p_{h}$ and
  $\bar{p}_h$ are constant on each connected component of
  $\Omega \setminus \Gamma_c$. Recalling that $\bar{p}_h=0$ on
  $\mathcal{F}_b$, $p_h$ and $\bar{p}_h$ must also vanish and so
  \cref{eq:discrete_fault} has a unique solution.
\end{proof}
\begin{remark}
  We assume that every connected component of
  $\Omega \setminus \Gamma_c$ has a boundary part intersecting with
  $\partial \Omega$. If this is not true, then there are connected
  components whose boundaries are included in $\Gamma_c$. For
  simplicity of presentation, assume that there is only one such
  connected component $\Omega_0$. Then, the compatibility condition
  $\int_{\partial \Omega_0} u \cdot n \dif s = \int_{\Omega_0} \nabla
  \cdot u \dif x$ is necessary as in pure Neumann boundary condition
  problems. Its discretization needs a modified bilinear form with
  Lagrange multiplier $q_0 \in \mathbb{R}$ where
  $\tilde{b}_h(v_h, \tilde{\boldsymbol{q}}_h) =
  b_h(v_h,\boldsymbol{q}_h) + q_0 (\int_{\Omega_0} \nabla \cdot v_h
  \dif x - \int_{\partial \Omega_0} v_h \cdot n \dif s)$ and
  $\tilde{\boldsymbol{q}}_h = (q_h, \bar{q}_h, q_{h,f}, q_{h,0}) \in
  \boldsymbol{Q}_h \times \mathbb{R}$. For well-posedness we can
  obtain $u_h=0$, $p_{h,f}=0$, and $\bar{p}_h = 0$ on $\mathcal{F}_0$
  by the same argument. We can also obtain that $p_h$ is element-wise
  constant by the same argument as in \cref{eq:grad_p-vanish} by using
  the $v_h$ defined in \cref{eq:well-posed-testfunction}. On the
  connected component $\Omega \setminus \overline{\Omega_0}$, we can
  show $p_h=0$, $\bar{p}_h=0$ as in the proof
  of~\cref{thm:wellposedness}, so it suffices to show
  $p_h=\bar{p}_h=p_{h,0}=0$ on $\Omega_0$. For
  $F \in \mathcal{F}_0 \cap \Omega_0$, by taking the test function as
  in \cref{eq:well-posed-testfunction2} with the right-hand side
  quantity replaced by $p_h|_{K_i} + p_{h,0}$, we can show that
  $p_h = - p_{h,0}$ on $\Omega_0$ because we assume that every
  $K\in \mathcal{T}$ contains a face in $\mathcal{F}_0$. Then,
  $\tilde{b}_h(v_h, \tilde{\boldsymbol{p}}_h)=0$ is reduced to
  $\sum_{K\in \mathcal{T}, K \subset \Omega_0} \int_{\partial K
    \cap\Gamma_s} \bar{p}_h v_h\cdot n \dif s + p_{h,0} \int_{\partial
    \Omega_0} v_h\cdot n \dif s = 0$. Taking $v_h$ as an arbitrary
  constant vector on $\Omega_0$ implies $p_{h,0}=p_h=0$ and taking the
  test function as in \cref{eq:well-posed-testfunction2} implies
  $\bar{p}_h=0$.
\end{remark}

%------------------------------------------------------------------------------
\section{Error Analysis}
\label{s:erroranalysis}

In this section we prove a priori error estimates for the solution of
our HDG/DG discretization defined by \cref{eq:discrete_fault}.

We begin with defining an interpolation operator.  On $Q_h$ and $V_h$
we consider the HDG projection
$(\Pi v,\Pi q)\in [P_k(K)]^{\rm dim}\times P_k(K)$. This projection is
defined on any $K\in \mathcal{T}$ and for any
$(v, q) \in [H^1(K)]^{\rm dim} \times H^1(K)$ as follows
(cf. \cite[(3.4)]{Sayas:book}):
\begin{subequations}
  \label{eq:HDG-projection-eqs}
  \begin{align}
    \label{eq:HDG-projection-eq1}
    \int_K \Pi v \cdot \tilde{v}\dif x
    &=\int_K v \cdot \tilde{v} \dif x & & \forall \tilde{v} \in [P_{k-1}(K)]^{\rm dim},
    \\
    \label{eq:HDG-projection-eq2}
    \int_K \Pi q\, \tilde{q} \dif x
    &= \int_K q \,\tilde{q} \dif x & & \forall \tilde{q} \in P_{k-1}(K),
    \\
    \label{eq:HDG-projection-eq3}
    \int_F (\Pi v \cdot n + \tilde{\alpha} \Pi q) r \dif s
    &=\int_F (v \cdot n +\tilde{\alpha} q ) r \dif s  & & \forall r \in P_k(F),\ F \subset \partial K,
  \end{align}
\end{subequations}
where $\tilde{\alpha} = \alpha$ if
$F \subset \mathcal{F}_0 \cup \mathcal{F}_b$, and $\tilde{\alpha} = 0$
if $F \in \mathcal{F}_c^f \cup \mathcal{F}_s^f$. Note that $\Pi v$ and
$\Pi q$ both depend on $v$ and $q$, however, for notational simplicity
we do not write down this dependence. Furthermore, let $\Bar{\Pi}_{Q}$
be the $L^2$-projection operator onto $\bar{Q}_h$, and let
$\Pi_{Q^f}:H^s(\Gamma_c)\mapsto Q_h^f$, $s >3/2$ be the following
elliptic projection:
\begin{equation}
  \label{eq:Qf-elliptic-projection}
  c_h^f(p^f,q_h^f)=c_h^f(\Pi_{Q^f} p^f,q_h^f) \qquad \forall q_h^f \in Q_h^f.
\end{equation}
We introduce the following decomposition of the errors:
\begin{equation}
  \label{eq:error-terms}
  \begin{aligned}
    e^I_{u}&=u-\Pi u,
    & e^h_{u}&=u_{h}-\Pi u, 
    \\
    e^I_{p}&=p-\Pi p,
    & e^h_{p}&=p_{h}-\Pi p, 
    \\
    \bar{e}^I_{p} & = \av{p}-\bar{\Pi}_Q \av{p},
    & \bar{e}^h_{p} & =\bar{p}_{h}-\bar{\Pi}_{Q} \av{p},
    \\
    e^I_{p_f}&=p_f-\Pi_{Q^f} p_f,
    & e^h_{p_f}&=p_{h,f}-\Pi_{Q^f} p_f.
  \end{aligned}
\end{equation}
We will use the following compact notation:
$\boldsymbol{e}^h_p=(e^h_{p},\Bar{e}^h_{p},e^h_{p_f})$, and
$\boldsymbol{e}^I_p=(e^I_{p},\Bar{e}^I_{p},e^I_{p_f})$.

A standard error estimate of interior penalty discontinuous Galerkin
methods (see e.g., \cite{Arnold:2002,Arnold:1982}) and a discrete
Poincar\'{e} inequality (cf. \cite{Brenner:2003}) give
\begin{subequations}
  \begin{align}
    \label{eq:pf-interpolation-H1-error}
    \norm[0]{ e_{p_f}^I }_{Q_{h,f}} &\le Ch^s \norm[0]{ p_f }_{H^{s+1}(\Gamma_c)}, && \tfrac{1}{2} < s \le k_f,
    \\
    \label{eq:pf-interpolation-L2-error}
    \norm[0]{ e_{p_f}^I }_{\Gamma_c} &\le Ch^s \norm[0]{ p_f }_{H^{s+1}(\Gamma_c)}, && \tfrac{1}{2} < s \le k_f.
  \end{align}  
\end{subequations}
If $\Gamma_c$ is convex, a standard duality argument results in
\begin{equation}
  \label{eq:pf-interpolation-duality-error}
  \norm[0]{ e_{p_f}^I }_{\Gamma_c} \le Ch^{s+1} \norm[0]{ p_f }_{H^{s+1}(\Gamma_c)}, \quad \tfrac{1}{2} < s \le k_f +1.
\end{equation}
Assuming that
$(u, p) \in [H^{s}(\mathring{\Omega})]^{\rm dim} \times
H^{s}(\mathring{\Omega})$ for $1 \le s \le k+1$, we have by
\cite[Proposition~3.5]{Sayas:book} that
\begin{subequations} 
  \label{eq:HDG-projection-estimates}
  \begin{align}
    \label{eq:HDG-projection-p-estimate}
    \norm[0]{ e_{p}^I }_{\Omega}
    &\le C h^{s} (\norm[0]{ p }_{H^{s}(\mathring{\Omega})} + \alpha^{-1} \norm[0]{ \nabla \cdot u }_{H^{s-1}(\mathring{\Omega})}),
    \\
    \label{eq:HDG-projection-u-estimate}
    \norm[0]{ e_{u}^I }_{\Omega}
    &\le Ch^{s}(\norm[0]{ u }_{H^{s}(\mathring{\Omega})} + \alpha \norm[0]{p}_{H^{s}(\mathring{\Omega})}). 
    \end{align}
\end{subequations}
We obtain the error equations in the following lemma.
\begin{lemma}[Error equation]
  For any $({v}_h,\boldsymbol{q}_h)\in {V}_h\times\boldsymbol{Q}_h$
  the following system of equations holds:
  \begin{subequations}
    \label{eq:Error_eq}
    \begin{align}
      \label{eq:Error_eq_a}
      a_h({e}^h_u,{v}_h) + b_h({v}_h,\boldsymbol{e}_p^h)
      &=\sum_{K\in \mathcal{T}} \int_K \kappa^{-1} e_{u}^I\cdot v_{h} \dif x
        + \sum_{F\in \mathcal{F}_c^f} \int_{F} e_{p_f}^I \jump{v_h \cdot n} \dif s,
      \\   
      \label{eq:Error_eq_b}
      -b_h({e}^h_u,\boldsymbol{q}_h) + c_h(\boldsymbol{e}^h_p,\boldsymbol{q}_h)
      &=0.
    \end{align}
  \end{subequations}
\end{lemma}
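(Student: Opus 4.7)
\bigskip

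The plan is to derive the error equations by subtracting the consistent equations satisfied by the exact solution from the discrete problem. By \cref{lem:consistency}, the pair $(u,\boldsymbol{p})=(u,(p,\av{p},p_f))$ satisfies \cref{eq:discrete_fault} with the same right-hand sides as $(u_h,\boldsymbol{p}_h)$. Subtracting and writing $u_h-u=e^h_u-e^I_u$, $\boldsymbol{p}_h-\boldsymbol{p}=\boldsymbol{e}^h_p-\boldsymbol{e}^I_p$ leaves two identities to verify: the first equation reduces to showing $a_h(e^I_u,v_h)+b_h(v_h,\boldsymbol{e}^I_p)$ collapses to the stated right-hand side, and the second reduces to showing $-b_h(e^I_u,\boldsymbol{q}_h)+c_h(\boldsymbol{e}^I_p,\boldsymbol{q}_h)=0$. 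The engine in both cases is the orthogonality built into the HDG projection \cref{eq:HDG-projection-eqs}, the $L^2$-projection property of $\bar{\Pi}_Q$, and the defining property \cref{eq:Qf-elliptic-projection} of $\Pi_{Q^f}$.

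For the first identity I would argue face-by-face. On any $F\in\mathcal{F}_c^f\cup\mathcal{F}_s^f$ we have $\tilde{\alpha}=0$, so \cref{eq:HDG-projection-eq3} gives $\int_F e^I_u\cdot n_K\,r\,\dif s=0$ for every $r\in P_k(F)$ from each side. Since $\av{v_h\cdot n},\jump{v_h\cdot n}\in P_k(F)$, all the fault-face contributions to $a_h(e^I_u,v_h)$ vanish, leaving only the bulk $\kappa^{-1}$ term. In $b_h(v_h,\boldsymbol{e}^I_p)$, the volume integral $\int_K e^I_p\,\nabla\cdot v_h\,\dif x$ vanishes via \cref{eq:HDG-projection-eq2} because $\nabla\cdot v_h\in P_{k-1}(K)$, and the face integrals $\int_{\partial K\setminus\Gamma_c}\bar{e}^I_p\,v_h\cdot n\,\dif s$ vanish because $\bar{e}^I_p$ is the $L^2$-error of $\bar{\Pi}_Q$ paired against $v_h\cdot n|_F\in P_k(F)\subset \bar Q_h|_F$. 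Only the $\sum_F \int_F e^I_{p_f}\jump{v_h\cdot n}\,\dif s$ term remains, as claimed.

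For the second identity I would integrate by parts in $\sum_K\int_K q_h\nabla\cdot e^I_u\,\dif x$: the bulk piece $\int_K\nabla q_h\cdot e^I_u\,\dif x$ vanishes by \cref{eq:HDG-projection-eq1} since $\nabla q_h\in[P_{k-1}(K)]^{\dim}$, producing $\sum_K\int_{\partial K}q_h\,e^I_u\cdot n\,\dif s$. On $\partial K\cap\Gamma_c$ I would use the identity $q_{h,+}(e^I_u)_+\cdot n_+ + q_{h,-}(e^I_u)_-\cdot n_- = \av{q_h}\jump{e^I_u\cdot n}+\jump{q_h}\av{e^I_u\cdot n}$ (already exploited in the consistency proof) and again invoke HDG orthogonality (since $\av{q_h},\jump{q_h}\in P_k(F)$) to see the $\Gamma_c$ portion cancels together with $-\sum_F\int_F q_{h,f}\jump{e^I_u\cdot n}\,\dif s$. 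On $\partial K\setminus(\Gamma_c\cup\Gamma_s)$ I would apply \cref{eq:HDG-projection-eq3} with $\tilde\alpha=\alpha$ and $r=q_h-\bar q_h\in P_k(F)$ to convert $\int_F(q_h-\bar q_h)e^I_u\cdot n\,\dif s$ into $-\alpha\int_F e^I_p(q_h-\bar q_h)\,\dif s$; this cancels exactly against the $\alpha$-term in $c_h(\boldsymbol{e}^I_p,\boldsymbol{q}_h)$ once one also observes that $\bar{e}^I_p$ is $L^2$-orthogonal to $\bar Q_h|_F$. The fault part $c_h^f(e^I_{p_f},q_{h,f})$ vanishes by the defining elliptic projection \cref{eq:Qf-elliptic-projection}.

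The main obstacle is the bookkeeping of the third step: one must pair the HDG projection identity \cref{eq:HDG-projection-eq3} with the correct test function $q_h-\bar q_h$ on each type of face ($\mathcal{F}_0\cup\mathcal{F}_b$ versus $\mathcal{F}_s^f$ versus $\mathcal{F}_c^f$), and keep track of signs coming from the outward normals $n_\pm=\pm n_+$, so that the $\alpha$-weighted terms in $c_h$ cancel exactly while the fault-face contributions collapse thanks to $\tilde\alpha=0$. Everything else is a direct application of \cref{eq:HDG-projection-eqs}, the $L^2$-projection property of $\bar\Pi_Q$, and the elliptic projection defining $\Pi_{Q^f}$.
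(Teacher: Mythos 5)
Your proposal is correct and follows essentially the same route as the paper's own proof: subtract the consistent equations satisfied by the exact solution (via \cref{lem:consistency}), then collapse the interpolation-error terms face by face using the HDG projection orthogonalities \cref{eq:HDG-projection-eqs}, the $L^2$-projection property of $\bar{\Pi}_{Q}$, and the elliptic projection \cref{eq:Qf-elliptic-projection}, exactly as the paper does. The only cosmetic difference is your use of the jump/average identity on $\Gamma_c$ where the paper applies \cref{eq:HDG-projection-eq3} directly on each cell boundary; the two are equivalent.
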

\begin{proof}
  Let $(u,p,p_f)$ be the solution of
  \cref{eq:fault_problem,eq:fault_problem_bcs} and write
  $\boldsymbol{p}=(p,\av{p},p_f)$.  Furthermore, let
  $(u_h,\boldsymbol{p}_h)$ be the solution of
  \cref{eq:discrete_fault}. Then, by consistency (see
  \cref{lem:consistency}), we obtain:
  \begin{subequations}
    \label{eq:error-eqs}
    \begin{align}
      \label{eq:error-eq1}
      a_h(u - u_h, {v}_h)
      + b_h({v}_h,\boldsymbol{p} - \boldsymbol{p}_h)
      &= 0 & & \forall {v}_h \in {V}_h, 
      \\
      \label{eq:error-eq2}
      - b_h({u}-{u}_h, \boldsymbol{q}_h)
      + c_h(\boldsymbol{p}-\boldsymbol{p}_h,\boldsymbol{q}_h)
      &= 0 & & \forall \boldsymbol{q}_h \in \boldsymbol{Q}_h.
    \end{align}
  \end{subequations}
  Using the decomposition of the errors introduced in
  \cref{eq:error-terms},
  \begin{align*}
    a_h({e}^h_u,{v}_h)
    + b_h({v}_h,\boldsymbol{e}_p^h)
    & = a_h({e}^I_u,{v}_h)
      + b_h({v}_h,\boldsymbol{e}_p^I)  & & \forall {v}_h \in {V}_h,
    \\
    -b_h({e}^h_u,\boldsymbol{q}_h)
    + c_h(\boldsymbol{e}^h_p,\boldsymbol{q}_h)
    & = - b_h({e}^I_u,\boldsymbol{q}_h)
      + c_h(\boldsymbol{e}^I_p,\boldsymbol{q}_h)  & & \forall \boldsymbol{q}_h \in \boldsymbol{Q}_h. 
  \end{align*}      
  By the definitions of $a_h(\cdot, \cdot)$ in \cref{eq:ah} and
  $b_h(\cdot, \cdot)$ in \cref{eq:bh}, and by properties of the HDG
  projection \cref{eq:HDG-projection-eqs}, we obtain
  \begin{equation*}
    \begin{split}
      a_h({e}_u^I,{v}_h)
      =& \sum_{K\in \mathcal{T}} \int_K \kappa^{-1} e_{u}^I\cdot v_{h} \dif x 
      +\sum_{F\in\mathcal{F}_s^f}\int_F 2\alpha_f^{-1}\av{e_u^I\cdot n}\av{v_h\cdot n}\dif s
      \\
      & + \sum_{F_f\in \mathcal{F}_c^f} \int_{F}\alpha_f^{-1}\big((\xi-\tfrac{1}{2})\jump{e_{u}^I\cdot n} \jump{v_h\cdot n}
      + 2\av{e_{u}^I\cdot n} \av{v_h\cdot n} )\dif s
      \\
      =& \sum_{K\in \mathcal{T}} \int_K \kappa^{-1} e_{u}^I\cdot v_{h} \dif x,      
    \end{split}
  \end{equation*}
  and
  \begin{equation*}
    \begin{split}
      b_h({v}_h,\boldsymbol{e}_p^I)
      &= -\sum_{K\in \mathcal{T}} \int_K e_{p}^I \nabla \cdot v_{h} \dif x
      + \sum_{K\in \mathcal{T}} \int_{\partial K\backslash \Gamma_c} \bar{e}_{p}^I v_{h} \cdot n \dif s
      + \sum_{F\in \mathcal{F}_c^f} \int_{F} e_{p_f}^I \jump{v_h \cdot n} \dif s
      \\
      &= \sum_{F\in \mathcal{F}_c^f} \int_{F} e_{p_f}^I \jump{v_h \cdot n} \dif s.      
    \end{split}
  \end{equation*}
  These identities complete the proof of \cref{eq:Error_eq_a}.

  For \cref{eq:Error_eq_b}, note that integration by parts gives
  \begin{align*}
    & -b_h({e}_{u}^I,\boldsymbol{q}_h)
      = \sum_{K\in \mathcal{T}} \int_K q_{h} \nabla \cdot e_{u}^I \dif x
      - \sum_{K\in \mathcal{T}} \int_{\partial K\backslash \Gamma_c} \bar{q}_{h} e_{u}^I \cdot n \dif s
      - \sum_{F_f\in \mathcal{F}_c^f} \int_{F} q_{h,f}\jump{e_{u}^I \cdot n} \dif s
    \\
    &= -\sum_{K\in \mathcal{T}} \int_K \nabla q_{h} \cdot e_{u}^I \dif x
      + \sum_{K\in \mathcal{T}} \int_{\partial K\backslash \Gamma_c} (q_{h}-\bar{q}_{h}) e_{u}^I \cdot n \dif s
      - \sum_{F\in \mathcal{F}_c^f} \int_{F} q_{h,f}\jump{e_{u}^I \cdot n} \dif s 
    \\
    &\quad + \sum_{K \in \mathcal{T}} \int_{\partial K \cap \Gamma_c} q_{h} {e_{u}^I \cdot n} \dif s
    \\
    &= \sum_{K\in \mathcal{T}} \int_{\partial K\backslash \Gamma_c} (q_{h}-\bar{q}_{h}) e_{u}^I \cdot n \dif s
      - \sum_{F\in \mathcal{F}_c^f} \int_{F} q_{h,f}\jump{e_{u}^I \cdot n} \dif s
      + \sum_{K \in \mathcal{T}} \int_{\partial K \cap F, F \in \mathcal{F}_c^f} q_{h} {e_{u}^I \cdot n} \dif s
    \\
    &= \sum_{K\in \mathcal{T}} \int_{\partial K\backslash \Gamma_c} (q_{h}-\bar{q}_{h}) e_{u}^I \cdot n \dif s 
    \\
    &= \sum_{K\in \mathcal{T}} \int_{\partial K\backslash (\Gamma_c \cup \Gamma_s)} (q_{h}-\bar{q}_{h}) e_{u}^I \cdot n \dif s,
  \end{align*}
  where the last two equalities are because of
  \cref{eq:HDG-projection-eq3}. Furthermore, by definition of
  $c_h(\cdot, \cdot)$ in \cref{eq:ch},
  \cref{eq:Qf-elliptic-projection}, and the definition of
  $\bar{\Pi}_{Q}$,
  \begin{equation}
  \label{eq:chepI}
      \begin{split}
    c_h(\boldsymbol{e}_p^I,\boldsymbol{q}_h)
    &= c_h^f (e_{p_f}^I, q_{h,f})
      + \sum_{K\in \mathcal{T}}\int_{\partial K \setminus (\Gamma_c \cup \Gamma_s)} \alpha (e^I_{p}-\bar{e}^I_{p})(q_{h}-\bar{q}_{h})\dif s
    \\
    &= \sum_{K\in \mathcal{T}}\int_{\partial K \setminus (\Gamma_c\cup \Gamma_s)} \alpha (e^I_{p}-\bar{e}^I_{p})(q_{h}-\bar{q}_{h})\dif s
    \\
    & =\sum_{K\in \mathcal{T}}\int_{\partial K \setminus (\Gamma_c\cup \Gamma_s)} \alpha e^I_{p}(q_{h}-\bar{q}_{h})\dif s.          
      \end{split}
  \end{equation}
  By \cref{eq:HDG-projection-eq3} we find
  $-b_h(e_{u}^I,\boldsymbol{q}_h)+c_h(\boldsymbol{e}_p^I,\boldsymbol{q}_h)
  = 0$, which completes the proof of \cref{eq:Error_eq_b}.
\end{proof}

\begin{theorem}
  \label{thm:errorestimates}
  Suppose that $u \in [H^s(\mathring{\Omega})]^{\rm dim}$,
  $p \in H^s(\mathring{\Omega})$, $1\le s \le k+1$ and
  $p_f \in H^{s_f+1}(\Gamma_c)$ for $\tfrac{1}{2} < s_f \le
  k_f$. Then,
  \begin{multline}
    \label{eq:u-pf-estimate}
    \norm[0]{ u - u_{h} }_{\Omega}  + \norm[0]{ p_f - p_{f,h} }_{Q_{h,f}} 
    \le C \kappa_{\min}^{-1} h^s (\norm[0]{ u}_{H^s(\mathring{\Omega})} + \norm[0]{ p }_{H^s(\mathring{\Omega})} )
    + C h^{s_f} \norm[0]{ p_f }_{H^{s_f+1}(\Gamma_c)},
  \end{multline}
  with $C>0$ depending on a discrete trace inequality constant, the
  penalty parameter $\sigma$, and the parameters $\kappa_{\min}$,
  $\alpha_f^{-1}$, and $\xi$, and
  \begin{equation}
    \label{eq:p-estimate}    
      \norm[0]{ p - p_{h} }_{\Omega}
      \le C_1 h^s (\norm[0]{ u }_{H^s(\mathring{\Omega})} + \norm[0]{ p }_{H^s(\mathring{\Omega})} )
      + C_2 h^{s_f} \norm[0]{ p_f }_{H^{s_f+1}(\Gamma_c)},
  \end{equation}
  where $C_1,C_2>0$ depend on $\kappa_{\min}$, $\alpha_f^{-1}$, and
  $\xi$.
  
  Moreover, if \cref{eq:pf-interpolation-duality-error} holds, then
  $h^{s_f}$ in the above estimates can be replaced by $h^{s_f+1}$.
\end{theorem}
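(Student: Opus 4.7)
The plan is to proceed by a standard energy argument for the first estimate \cref{eq:u-pf-estimate}, and then by an inf-sup style construction (inspired by the well-posedness proof) for the pressure estimate \cref{eq:p-estimate}.

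First I would test the error equations with themselves: set $v_h = e_u^h$ in \cref{eq:Error_eq_a} and $\boldsymbol{q}_h = \boldsymbol{e}_p^h$ in \cref{eq:Error_eq_b} and add. The $b_h$ terms cancel, leaving
\begin{equation*}
a_h(e_u^h, e_u^h) + c_h(\boldsymbol{e}_p^h, \boldsymbol{e}_p^h)
= \sum_{K\in\mathcal{T}} \int_K \kappa^{-1} e_u^I \cdot e_u^h \dif x
+ \sum_{F \in \mathcal{F}_c^f} \int_F e_{p_f}^I \jump{e_u^h \cdot n} \dif s.
\end{equation*}
Using coercivity of $a_h$ (\cref{eq:ah-coercive}) on the left and Cauchy--Schwarz plus Young's inequality on the right, and noting that $\norm{\jump{e_u^h\cdot n}}_F$ is controlled by $\norm{e_u^h}_{V_h}$, I absorb the $e_u^h$ factors into the left-hand side. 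Combined with the positive semi-definiteness of $c_h$ from \cref{lem:ch-coercive}, this yields
\begin{equation*}
\norm{e_u^h}_{V_h}^2 + \kappa_{\min}\norm{e_{p_f}^h}_{Q_{h,f}}^2
\le C\del[1]{\norm{e_u^I}_\Omega^2 + \norm{e_{p_f}^I}_{\Gamma_c}^2}.
\end{equation*}
Estimate \cref{eq:u-pf-estimate} then follows via the triangle inequality, together with the interpolation bounds \cref{eq:HDG-projection-u-estimate} and \cref{eq:pf-interpolation-L2-error,eq:pf-interpolation-H1-error}.

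For the pressure estimate \cref{eq:p-estimate}, I will bound $\norm{e_p^h}_\Omega$ and then apply the triangle inequality with \cref{eq:HDG-projection-p-estimate}. This is where the well-posedness argument guides the construction. For each simplex $K$, I invoke \cref{lem:Pitilde} to construct $v_h^K \in [P_k(K)]^{\dim}$ supported on $K$, vanishing normally on $\partial K \setminus F$ for some chosen $F \in \mathcal{F}_0 \cap \partial K$, and satisfying $\int_K v_h^K \cdot w \dif x = \int_K \nabla e_p^h \cdot w \dif x$ for all $w \in [P_{k-1}(K)]^{\dim}$. Assembling these element-wise gives a global $v_h \in V_h$ which, inserted into \cref{eq:Error_eq_a} after integration by parts in $b_h$ (as in \cref{eq:intermediate}), produces $\sum_K\norm{\nabla e_p^h}_K^2$ on one side and $a_h(e_u^h, v_h)$ plus boundary jump terms involving $e_p^h - \bar{e}_p^h$ and $e_{p_f}^I$ on the other. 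Controlling $\norm{v_h}_{V_h}$ by $\sum_K \norm{\nabla e_p^h}_K$ (via the stability of $\tilde\Pi_K^F$) and using the already-established bounds on $\norm{e_u^h}_{V_h}$ and $\norm{e_p^h - \bar{e}_p^h}$ from $c_h$-coercivity gives a bound on the broken $H^1$-seminorm of $e_p^h$. Then, exactly as in the final step of \cref{thm:wellposedness}, I use a BDM-style test function analogous to \cref{eq:well-posed-testfunction2} on sealing faces and the hypothesis $\bar{e}_p^h = 0$ on $\partial\Omega$ to chain the element constants together, so a discrete Poincaré inequality upgrades the broken seminorm bound to an $L^2$ bound on $e_p^h$.

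The main obstacle is this last step: getting $\norm{e_p^h}_\Omega$ rather than just the broken seminorm, since $c_h$ only controls $\norm{q_h - \bar{q}_h}$ on $\partial K \setminus (\Gamma_c \cup \Gamma_s)$ and not on $\Gamma_s$ or $\Gamma_c$. The well-posedness proof handled this by separate arguments on each connected component of $\Omega \setminus \Gamma_c$, and the same geometric decomposition will be needed here. Finally, the improvement to $h^{s_f+1}$ under \cref{eq:pf-interpolation-duality-error} follows simply by replacing \cref{eq:pf-interpolation-L2-error} with the duality-based $L^2$ bound in the estimates for $\norm{e_{p_f}^I}_{\Gamma_c}$ wherever it appears on the right-hand side.
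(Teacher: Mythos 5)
Your energy argument for \cref{eq:u-pf-estimate} is essentially the paper's proof: test the error equations with $(e_u^h,\boldsymbol{e}_p^h)$, add, use coercivity of $a_h$, positive semi-definiteness of $c_h$, and Young's inequality, then conclude by the triangle inequality and the interpolation bounds. That part is fine.

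The route you propose for \cref{eq:p-estimate}, however, has a genuine gap: it cannot deliver the stated $O(h^s)$ rate. The only control you have on inter-element pressure jumps is $\alpha\sum_{K}\norm[0]{e_p^h-\bar e_p^h}^2_{\partial K\setminus(\Gamma_c\cup\Gamma_s)}\le C h^{2s}(\dots)$, an \emph{unweighted} face $L^2$ norm. Your first step (testing with the element-wise $\tilde\Pi_K^F$-construction to extract $\sum_K\norm[0]{\nabla e_p^h}_K^2$) produces a face term $\int_F(\bar e_p^h-e_p^h)\,v_h^K\cdot n\,\dif s$ that must be bounded via a discrete trace inequality $\norm[0]{v_h^K\cdot n}_F\le Ch_K^{-1/2}\norm[0]{v_h^K}_K$, costing a factor $h^{-1/2}$ and yielding only $\lvert e_p^h\rvert_{1,h}\le Ch^{s-1/2}$; the broken Poincar\'e inequality then requires $\sum_F h_F^{-1}\norm[0]{\jump{e_p^h}}_F^2$, costing another $h^{-1/2}$ on the jump contribution, so the best you can conclude is $\norm[0]{e_p^h}_\Omega\le Ch^{s-1/2}$. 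Moreover, the jumps of $e_p^h$ across $\Gamma_c$ are controlled by nothing at all (the coupling there is through $q_f$, not $\bar q$), and the quantitative "chaining" via \cref{eq:well-posed-testfunction2} that you borrow from the well-posedness proof only worked there because $\nabla p_h$ vanished exactly. The paper avoids all of this with a duality-type argument: it takes $w\in[H^1(\Omega)]^{\rm dim}$ with $\nabla\cdot w=-e_p^h$ and $\norm[0]{w}_{H^1(\Omega)}\le C\norm[0]{e_p^h}_\Omega$, writes $\norm[0]{e_p^h}_\Omega^2=-\int_\Omega e_p^h\nabla\cdot w\,\dif x$, inserts $v_h=\tilde\Pi w$ into \cref{eq:Error_eq_a}, and exploits two facts you do not use: the continuity of $w\cdot n$ across $\Gamma_c\cup\Gamma_s$ together with $\tilde\Pi w\cdot n|_F=P_Fw\cdot n|_F$ makes the uncontrolled fault terms cancel, and the superconvergence bound \cref{eq:Pitilde-approx-ineq1} supplies an extra factor $h^{1/2}$ in front of $\sum_K\norm[0]{e_p^h-\bar e_p^h}_{\partial K\setminus(\Gamma_c\cup\Gamma_s)}$, which is exactly what makes the unweighted penalty control sufficient for optimality. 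Without some substitute for these two mechanisms your sketch proves at best a half-order-suboptimal pressure estimate.
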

\begin{proof}
  Taking ${v}_h = {e}_u^h$, $\boldsymbol{q}_h = \boldsymbol{e}_p^h$ in
  \cref{eq:Error_eq}, adding \cref{eq:Error_eq_a,eq:Error_eq_b}, and
  using the Cauchy--Schwarz inequality, we obtain:
  \begin{equation}
    \label{eq:error-boundedness}
    a_h({e}^h_u, {e}^h_u)+c_h(\boldsymbol{e}^h_p,\boldsymbol{e}^h_p) \le
    \del[3]{\kappa_{\min}^{-1}\norm[0]{e_{u}^I}_{\Omega} + \norm[0]{e_{p_f}^I}_{\Gamma_c} }\norm[0]{{e}_u^h}_{V_h}.
  \end{equation}
  Using the coercivity of $a_h$ (see \cref{eq:ah-coercive}) and the
  positive semi-definiteness of $c_h$ (see \cref{lem:ch-coercive}), we
  obtain:
  \begin{equation}
    \label{eq:error-coercivity}
    a_h({e}^h_u,{e}^h_u) + c_h(\boldsymbol{e}^h_p,\boldsymbol{e}^h_p)
    \ge
    C_c \del[1]{ \norm[0]{{e}^h_u}_{{V}_h}^2 + \norm[0]{e_{p_f}^h}_{{Q}_{h,f}}^2 }
    + \alpha \sum_{K \in \mathcal{T}} \norm[0]{ e_{p}^h-\bar{e}_{p}^h }_{\partial K \setminus (\Gamma_c \cup \Gamma_s)}^2,
  \end{equation}
  with
  $C_c = \min \cbr[0]{\kappa_{\max}^{-1}, \alpha_f^{-1}(\xi-1/2),
    C\kappa_{\min}}$. Applying Young's inequality to the right hand
  side of \cref{eq:error-boundedness} and combining
  \cref{eq:error-boundedness,eq:error-coercivity}, we obtain:
  \begin{multline}
    \label{eq:u-pf-h-estimate}
    \tfrac{1}{2}C_c \norm[0]{{e}^h_u}_{V_h}^2
    + C_c \norm[0]{e_{p_f}^h}_{{Q}_{h,f}}^2
    + \alpha \sum_{K \in \mathcal{T}} \norm[0]{e_{p}^h-\bar{e}_{p}^h}_{\partial K \setminus (\Gamma_c \cup \Gamma_s)}^2
    \le \tfrac{1}{2} C_c^{-1} \del[3]{
      \kappa_{\min}^{-1} \norm[0]{ e_{u}^I }_{\Omega} + \norm[0]{e_{p_f}^I}_{\Gamma_c} }^2.
  \end{multline}
  \Cref{eq:u-pf-estimate} follows by a triangle inequality,
  \cref{eq:pf-interpolation-L2-error,eq:HDG-projection-u-estimate}.

  We next prove \cref{eq:p-estimate}. It is known
  (cf. \cite{Girault-Raviart:book} and \cite[Lemma
  B.1]{Baerland:2017}) that there exists
  $w \in [H^1(\Omega)]^{\rm dim}$ such that
  $\nabla \cdot w = -e_{p}^h$ and
  $\norm[0]{w}_{H^1(\Omega)} \le C \norm[0]{e_{p}^h}_{\Omega}$ with
  $C>0$ only depending on $\Omega$. 
  Then,
  \begin{equation*}
    \norm[0]{e_{p}^h}_{\Omega}^2
    = -\int_{\Omega} e_{p}^h \nabla \cdot w \dif x
    = \int_{\Omega} \nabla e_{p}^h \cdot w \dif x
    - \sum_{K \in \mathcal{T}} \int_{\partial K}  e_{p}^h w\cdot n \dif s.
  \end{equation*}
  For every $K\in \mathcal{T}$, we fix a face $F\in \mathcal{F}_0$ of
  $K$ and define $\tilde{\Pi}_K^F$ using
  \cref{eq:Pitilde-def-eqs}. Then we introduce
  $\tilde{\Pi}:[H^1(\Omega)]^{\rm dim}\rightarrow V_h$ such that
  $\tilde{\Pi}w=\tilde{\Pi}_K^Fw$ on each $K$.  By
  \cref{eq:Pitilde-def-eq1},
  $\int_{\Omega} \nabla e_{p}^h \cdot w \dif x = \int_{\Omega} \nabla
  e_{p}^h \cdot \tilde{\Pi} w \dif x$. Applying this to the above
  identity,
  \begin{equation}
    \label{eq:normepih-1}
    \begin{split}
      \norm[0]{e_{p}^h}_{\Omega}^2
      =& \int_{\Omega} \nabla e_{p}^h \cdot \tilde{\Pi} w \dif x
      - \sum_{K \in \mathcal{T}} \int_{\partial K}  e_{p}^h w \cdot n \dif s 
      \\
      =& \int_{\Omega} \nabla e_{p}^h \cdot \tilde{\Pi} w \dif x
      + \sum_{K \in \mathcal{T}} \int_{\partial K \setminus (\Gamma_c \cup \Gamma_s)}  (\bar{e}_{p}^h - e_{p}^h) w \cdot n \dif s 
      \\
      & -\sum_{K \in \mathcal{T}} \int_{\partial K \cap (\Gamma_c \cup \Gamma_s)}  e_{p}^h w \cdot n \dif s,      
    \end{split}
  \end{equation}
  where for the second equality we used the normal continuity of
  $w\cdot n$ on $F \in \mathcal{F}_0 \cup \mathcal{F}_s^f$,
  single-valuedness of $\bar{e}_p^h$, and that $\bar{e}_p^h = 0$ on
  $F \in \mathcal{F}_b$. Using integration by parts we note that
  \begin{equation}
    \label{eq:normepih-2}
    \begin{split}
      b_h(\tilde{\Pi} w, \boldsymbol{e}_p^h)
      =& \int_{\Omega} \nabla e_{p}^h \cdot \tilde{\Pi} w \dif x
      + \sum_{K \in \mathcal{T}} \int_{\partial K \setminus (\Gamma_c \cup \Gamma_s)}  (\bar{e}_{p}^h - e_{p}^h) \tilde{\Pi} w \cdot n \dif s 
      \\
      & - \sum_{K \in \mathcal{T}} \int_{\partial K \cap (\Gamma_c \cup \Gamma_s)}  e_{p}^h\, \tilde{\Pi} w \cdot n \dif s
      + \sum_{F \in \mathcal{F}_c^f} \int_{F} e_{p_f}^h \jump{\tilde{\Pi} w \cdot n} \dif s.
    \end{split}
  \end{equation}
  Combining \cref{eq:normepih-1,eq:normepih-2},
  \begin{equation*}
    \begin{split}
      \norm[0]{e_{p}^h}_{\Omega}^2
      =& b_h(\tilde{{\Pi}} w, \boldsymbol{e}_p^h)
      + \sum_{K \in \mathcal{T}} \int_{\partial K \setminus (\Gamma_c \cup \Gamma_s)} (\bar{e}_{p}^h - e_{p}^h) (w - \tilde{\Pi} w) \cdot n \dif s 
      \\
      & - \sum_{K \in \mathcal{T}} \int_{\partial K \cap (\Gamma_c \cup \Gamma_s)}  e_{p}^h (w - \tilde{\Pi} w) \cdot n \dif s
      - \sum_{F \in \mathcal{F}_c^f} \int_{F} e_{p_f}^h \jump{\tilde{{\Pi}} w \cdot n} \dif s.
    \end{split}
  \end{equation*}
  Since $w \cdot n$ is continuous on $\Gamma_c$ and
  $\tilde{\Pi} w \cdot n |_F = P_F w \cdot n|_F$ for
  $F\in \mathcal{F}_c^f \cup \mathcal{F}_s^f$,
  \begin{equation}
    \label{eq:normepih-3}
    \norm[0]{e_{p}^h}_{\Omega}^2
    = b_h(\tilde{\Pi} w, \boldsymbol{e}_p^h)
    + \sum_{K \in \mathcal{T}} \int_{\partial K \setminus (\Gamma_c \cup \Gamma_s)}(\bar{e}_{p}^h - e_{p}^h) (w - \tilde{\Pi} w) \cdot n \dif s.
  \end{equation}
  Let $P_{\partial K} w \cdot n$ be the face-wise $L^2$-orthogonal
  projection of $w \cdot n$ into $P_k(\partial K)$. Taking
  ${v}_h = \tilde{\Pi} w$ in \cref{eq:Error_eq_a}, combining with
  \cref{eq:normepih-3}, using the $L^2$-orthogonality of
  $w\cdot n - P_{\partial K} w \cdot n$ to $P_k(\partial K)$, and the
  continuity of $\tilde{\Pi}w \cdot n$ on $\Gamma_c\cup \Gamma_s$, we
  obtain:
  \begin{equation}
    \label{eq:normepihI1I2I3}
    \begin{split}
      \norm[0]{e_{p}^h}_{\Omega}^2
      =& \sum_{K \in \mathcal{T}} \int_{\partial K \setminus (\Gamma_c\cup \Gamma_s)}  (\bar{e}_{p}^h - e_{p}^h) (P_{\partial K} w \cdot n - \tilde{\Pi} w \cdot n) \dif s - a_h({e}_u^h, \tilde{\Pi} w)
      \\
      & + \sum_{K\in \mathcal{T}} \int_K \kappa^{-1} e_{u}^I\cdot \tilde{\Pi} w \dif x 
      \\
      =&: I_1 + I_2 + I_3.
    \end{split}
  \end{equation}
  By the Cauchy--Schwarz inequality and
  \cref{eq:Pitilde-approx-ineq1},
  \begin{equation*}
    |I_1| \le C h^{\tfrac{1}{2}} \del[3]{ \sum_{K \in \mathcal{T}} \norm[0]{e_{p}^h-\bar{e}_{p}^h }_{\partial K \setminus (\Gamma_c\cup \Gamma_s)}^2
    }^{\tfrac{1}{2}} \norm[0]{e_{p}^h}_{\Omega}. 
  \end{equation*}
  We next bound $I_2$. By \cref{eq:ah-bounded},
  \begin{equation}
  \label{eq:firstboundI2}
      |I_2| \le 
      C\max\cbr[0]{\kappa_{\min}^{-1},\alpha_f^{-1}} \norm[0]{e_u^h}_{V_h} \norm[0]{\tilde{\Pi}w}_{V_h}.
  \end{equation}
  Using that $\tilde{\Pi}w\cdot n$ is continuous on $\Gamma_c$ in the
  definition of $\norm{\cdot}_{V_h}$:
  \begin{equation}
    \label{eq:checkingstuff}
    \norm[0]{\tilde{\Pi}w}_{{V}_h}^2
    = \norm[0]{\tilde{\Pi}w}_{\Omega}^2 
    + \sum_{F\in \mathcal{F}_c^f}\norm[0]{\av{\tilde{\Pi}w\cdot n}}_F^2
    + \sum_{F\in \mathcal{F}_s^f}\norm[0]{\av{\tilde{\Pi}w\cdot n}}_F^2.
  \end{equation}
  By the trace inequality
  $\norm[0]{ w }_{H^{1/2}(\Gamma_c\cup \Gamma_s)} \le C \norm[0]{ w
  }_{H^1(\Omega)}$ we find
  \begin{equation}
  \label{eq:pitildewneph}
    \norm[0]{ \av{\tilde{\Pi}w \cdot n} }_{\Gamma_c\cup \Gamma_s}
    \le 
    \norm[0]{ w \cdot n }_{\Gamma_c\cup \Gamma_s}
    \le
    C \norm[0]{ w }_{H^1(\Omega)}
    \le
    C \norm[0]{ e_{p}^h }_{\Omega}.
  \end{equation}
  Furthermore, by \cref{eq:Pitilde-approx-ineq2}, we note that
  \begin{equation*}
      \norm[0]{\tilde{\Pi}w}_{K}
      \le \norm[0]{\tilde{\Pi}w - w}_K + \norm[0]{w}_K
      \le Ch_K\norm[0]{w}_{H^1(K)} + \norm[0]{w}_K \le C\norm[0]{w}_{H^1(K)},
  \end{equation*}
  and so
  $\norm[0]{\tilde{\Pi}w}_{\Omega} \le C\norm[0]{w}_{H^1(\Omega)} \le
  C \norm[0]{e_p^h}_{\Omega}$. Using this inequality and
  \cref{eq:pitildewneph} in \cref{eq:checkingstuff} we find that
  $\norm[0]{\tilde{\Pi}w}_{V_h}^2 \le
  C\norm[0]{e_p^h}_{\Omega}^2$. Therefore, in combination with
  \cref{eq:firstboundI2},
  \begin{equation*}
      |I_2| \le 
      C\max\cbr[0]{\kappa_{\min}^{-1},\alpha_f^{-1}} \norm[0]{e_u^h}_{V_h} \norm[0]{e_p^h}_{\Omega},
  \end{equation*}
  and by the Cauchy-Schwarz inequality,
  $|I_3| \le C \kappa_{\min}^{-1} \norm[0]{ e_{u}^I }_{\Omega}
  \norm[0]{ e_{p}^h }_{\Omega}$. Combining \cref{eq:normepihI1I2I3}
  with the bounds for $I_1$, $I_2$, $I_3$, and using Young's
  inequality,
  \begin{equation*}
    \norm[0]{e_{p}^h}_{\Omega}^2
    \le C h \sum_{K \in \mathcal{T}} \norm[0]{e_{p}^h-\bar{e}_{p}^h}_{\partial K \setminus (\Gamma_c\cup \Gamma_s)}^2
    + C (\max \cbr[0]{ \kappa_{\min}^{-1}, \alpha_f^{-1} })^2 \norm[0]{ {e}_u^h }_{{V}_h}^2
    + C \kappa_{\min}^{-2} \norm[0]{ e_{u}^I }_{\Omega}^2.
  \end{equation*}
  Applying \cref{eq:u-pf-h-estimate},
  \begin{equation*}
    \begin{split}
      \norm[0]{ e_{p}^h }_{\Omega}^2
      \le& \sbr[3]{Ch \alpha^{-1} C_c^{-1}
        +  C (\max \cbr[0]{ \kappa_{\min}^{-1}, \alpha_f^{-1} })^2 C_c^{-2}}
      \del[3]{ \kappa_{\min}^{-1} \norm[0]{ e_{u}^I }_{\Omega} + \norm[0]{e_{p_f}^I}_{\Gamma_c} }^2
      \\
      & + C \kappa_{\min}^{-2} \norm[0]{ e_{u}^I }_{\Omega}^2.
    \end{split}
  \end{equation*}
  \Cref{eq:p-estimate} now follows by the triangle inequality,
  \cref{eq:pf-interpolation-L2-error,eq:HDG-projection-u-estimate,eq:HDG-projection-p-estimate}.
\end{proof}

\begin{remark}
  \label{rem:dualityremark}
  The error estimates \cref{eq:u-pf-estimate,eq:p-estimate} show that
  if $s = k+1$ and $s_f = k_f$, then the errors in the velocity and
  pressures in the $L^2$-norm are $\mathcal{O}(h^{\min(k+1,k_f+1)})$
  if the duality argument \cref{eq:pf-interpolation-duality-error}
  holds on the fault, and $\mathcal{O}(h^{\min(k+1,k_f)})$
  otherwise. Hence, if the duality argument holds it suffices to
  choose $k_f = k$ in $Q_h^f$ to obtain errors of
  $\mathcal{O}(h^{k+1})$. If the duality argument does not hold, then
  errors of $\mathcal{O}(h^{k+1})$ can be obtained by choosing
  $k_f = k+1$.
\end{remark}

%------------------------------------------------------------------------------
\section{Numerical results}
\label{s:numericalresults}

We showcase the performance of the proposed HDG/DG scheme in various
examples below\footnote{Further numerical experiments using two
  alternative hybridizable discretizations based on a similar
  philosophy as the HDG/DG scheme, where the inter-dimensional
  coupling between the problems is naturally handled by the face
  variables, are presented in \ref{ap:numres-alt} without analysis.}.
In \cref{ex:mms} we verify the convergence properties of the
discretization established in \cref{thm:errorestimates}. Afterwards,
in \cref{ex:conducting,ex:sealing,ex:intersecting}, we apply the
HDG/DG method to various benchmark problems to demonstrate its
robustness in different scenarios, including whether the fault is
conducting or sealing, whether it is fully, partially, or not
immersed, and whether there are multiple faults that may or may not
intersect.  All the examples were implemented using the
FEniCS~\cite{logg2012automated}-based library
FEniCS\textsubscript{ii}~\cite{kuchta2020assembly} for coupled
inter-dimensional problems.

%------------------------------------------------------------------------------
\subsection{Error convergence}
\label{ex:mms}

We consider a domain $\Omega=(-1, 1)^2$ split into subdomains
$\Omega_1$, $\Omega_2$, $\Omega_3$ by a pair of vertical faults; a
conducting fault $\Gamma_c$ given by the line segment that connects
points $[-0.5, -1]$ and $[-0.5, 1]$ and a sealing fault $\Gamma_s$
defined by points $[0.5, -1]$ and $[0.5, 1]$.  For the exact solution,
we choose $p$ and $p_f$ as follows
\begin{equation*}
  p = \begin{cases}
    \sin(\pi(x+y)) & [x,  y]\in \Omega_1\\
    \cos(\pi(x+y)) & [x,  y]\in \Omega_2\\
    \cos(\pi(2x-y)) & [x,  y]\in \Omega_3\\    
    \end{cases},\quad
  p_f = \sin(\pi(x-2y))
\end{equation*}
and adjust the right-hand sides of
\cref{eq:fault_problem_d,eq:fault_problem_e,eq:fault_problem_f,eq:fault_problem_g}
accordingly. We prescribe $p$ and $p_f$ as boundary data on the top
and bottom edges of $\Omega$ and
$\partial\Gamma_c = (\partial\Gamma_c)^D$, respectively. The
corresponding normal flux $u \cdot n$ is set on the lateral edges. A
sample mesh used in the (uniform) refinement study is shown in
\cref{fig:fault_mms}. In the experiments we let $\alpha_f=2$,
$\xi=0.75$, $\kappa_f=3$ and consider a discontinuous permeability
$\kappa$ such that $\kappa|_{\Omega_1}=5$, $\kappa|_{\Omega_2}=4$ and
$\kappa|_{\Omega_3}=6$.  The stabilization parameter is set as
$\sigma=10 k^2$.

Errors in $u$, $p$, and $p_f$ in the $L^2$-norm are shown for
different levels of refinement and for $k=1,2,3$, $k_f=k$ in
\cref{tab:hdg_ipdg_cvrg} together with rates of convergence. We
observe that the errors for all the unknowns approach the order
$\mathcal{O}(h^{k+1})$, as expected from \cref{thm:errorestimates}.

\begin{figure}
  \centering
  \includegraphics[align=c,height=0.3\textwidth]{./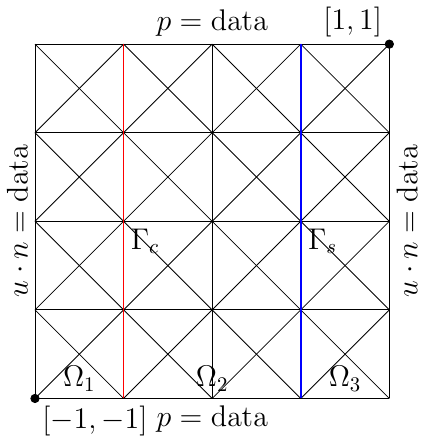}
  \hspace{2pt}
  \includegraphics[align=c, height=0.3\textwidth]{./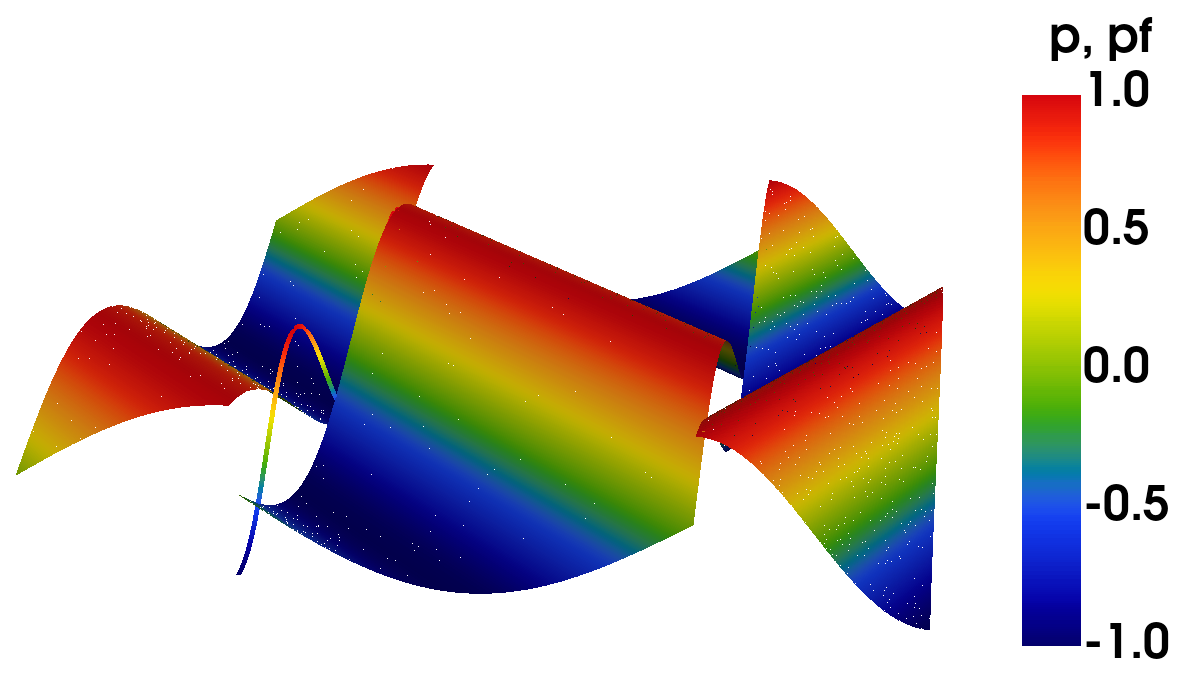}  
  \caption{Setup of the error convergence study in
    \cref{ex:mms}. (Left) Geometry with a conducting fault in red,
    sealing fault in blue, and a sample mesh. (Right) Discrete
    pressures on the finest mesh ($h=1/64$) with the HDG/DG
    discretization choosing $k=k_f=1$.}
  \label{fig:fault_mms}
\end{figure}

\begin{table}
  \centering
  \scriptsize{
    \begin{tabular}{l|lll}
      \hline
      $h$         & $\lVert u - u_h \rVert_{\Omega}$ & $\lVert p - p_h\rVert_{\Omega}$ & $\lVert p_f - p_{h, f}\rVert_{\Gamma_c}$ \\
      \hline
      \multicolumn{4}{c}{$k=1$}\\
      \hline
 0.5    & 6.984E+00(--) & 8.934E-01(--) & 2.026E-01(--) \\
 0.25   & 1.842E+00(1.92)  & 2.010E-01(2.15)  & 1.943E-01(0.06)  \\
 0.125  & 4.654E-01(1.98)  & 4.715E-02(2.09)  & 5.082E-02(1.93)  \\
 0.0625 & 1.165E-01(2.00)  & 1.129E-02(2.06)  & 1.309E-02(1.96)  \\
 0.0312 & 2.909E-02(2.00)  & 2.754E-03(2.04)  & 3.323E-03(1.98)  \\
 0.0156 & 7.267E-03(2.00)  & 6.795E-04(2.02)  & 8.371E-04(1.99)  \\
  \hline
  \multicolumn{4}{c}{$k=2$}\\
  \hline
 0.5    & 1.470E+00(--) & 1.522E-01(--) & 1.992E-01(--) \\
 0.25   & 1.888E-01(2.96)  & 2.001E-02(2.93)  & 2.105E-02(3.24)  \\
 0.125  & 2.366E-02(3.00)  & 2.373E-03(3.08)  & 2.759E-03(2.93)  \\
 0.0625 & 2.950E-03(3.00)  & 2.863E-04(3.05)  & 3.484E-04(2.99)  \\
 0.0312 & 3.680E-04(3.00)  & 3.508E-05(3.03)  & 4.365E-05(3.00)  \\
 0.0156 & 4.594E-05(3.00)  & 4.338E-06(3.02)  & 5.459E-06(3.00)  \\
  \hline
  \multicolumn{4}{c}{$k=3$}\\
  \hline
 0.5    & 2.385E-01(--) & 2.981E-02(--) & 6.027E-03(--) \\
 0.25   & 1.573E-02(3.92)  & 1.624E-03(4.20)  & 1.923E-03(1.65)  \\
 0.125  & 9.905E-04(3.99)  & 9.712E-05(4.06)  & 1.238E-04(3.96)  \\
 0.0625 & 6.190E-05(4.00)  & 5.893E-06(4.04)  & 7.829E-06(3.98)  \\
 0.0312 & 3.864E-06(4.00)  & 3.622E-07(4.02)  & 4.916E-07(3.99)  \\
 0.0156 & 2.413E-07(4.00)  & 2.244E-08(4.01)  & 3.079E-08(4.00)  \\
      \hline
    \end{tabular}
  }
  \caption{Convergence of the HDG/DG approximation with $k_f=k=1,2,3$
    to the solution of the coupled problem setup in
    \cref{ex:mms}. Errors are reported in the $L^2$-norms on the
    respective domains.  Estimated rates are shown in the brackets.  }
  \label{tab:hdg_ipdg_cvrg}
\end{table}

\begin{remark}[Non-convex $\Gamma_c$]\label{rmrk:nonconvex}
  In the previous example the convexity of $\Gamma_c$ ensured that the
  estimate \eqref{eq:pf-interpolation-duality-error} held and in turn,
  following \cref{thm:errorestimates}, taking $k_f=k$ was sufficient
  to achieve convergence order $k+1$. Here we consider a setup where
  $\Gamma_c$ is not convex and the duality argument may not hold.
  
  We let $\Omega=(-1, 1)\times (-0.5, 1.5)$ and split the domain into
  $\Omega_+$, $\Omega_-$ by a single conducting fracture $\Gamma_c$
  defined as a union of three segments $\Gamma_{c, 1}$,
  $\Gamma_{c, 2}$ and $\Gamma_{c, 3}$, cf. \cref{fig:nonconvex}.  We
  define the exact solution $p$ and $p_f$ as follows
  \begin{equation*}
    p|_{\Omega_{+}} = \sin(\pi(x+y)),\quad
    p|_{\Omega_{-}} = \cos(\pi(x/2+y)),\quad
    p_f = \begin{cases}
      \cos(2 \pi y) & [x,  y]\in \Gamma_{c, 2}\\        
      \cos(\pi x) & [x,  y]\in \Gamma_{c, 1}\cup\Gamma_{c, 3}\\    
    \end{cases}.
  \end{equation*}
  We prescribe $p$ and $p_f$ as boundary data on $\partial\Omega$ and
  $\partial\Gamma_c = (\partial\Gamma_c)^D$, respectively and set the
  model parameters as $\alpha_f=2$, $\xi=0.75$, $\kappa_f=3$ while
  considering a discontinuous permeability $\kappa$ such that
  $\kappa|_{\Omega_+}=5$, $\kappa|_{\Omega_-}=4$. Accordingly we also
  adjust the right-hand sides of
  \cref{eq:fault_problem_d,eq:fault_problem_e}.

  Setting $k_f=k$, $k=1, 2, 3$ and (as before) $\sigma=10 k^2$ we
  observe in \cref{tab:nonconvex} that while $\Gamma_c$ is non-convex,
  the HDG/DG scheme still yields convergence rates of order $k+1$ for
  $u$, $p$ and $p_f$ without the need to choose $k_f=k+1$.

  \begin{figure}
    \centering
    \includegraphics[align=c, height=0.3\textwidth]{./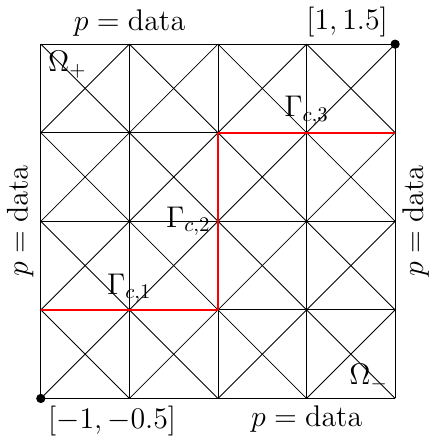}
    \hspace{2pt}
    \includegraphics[align=c, height=0.3\textwidth]{./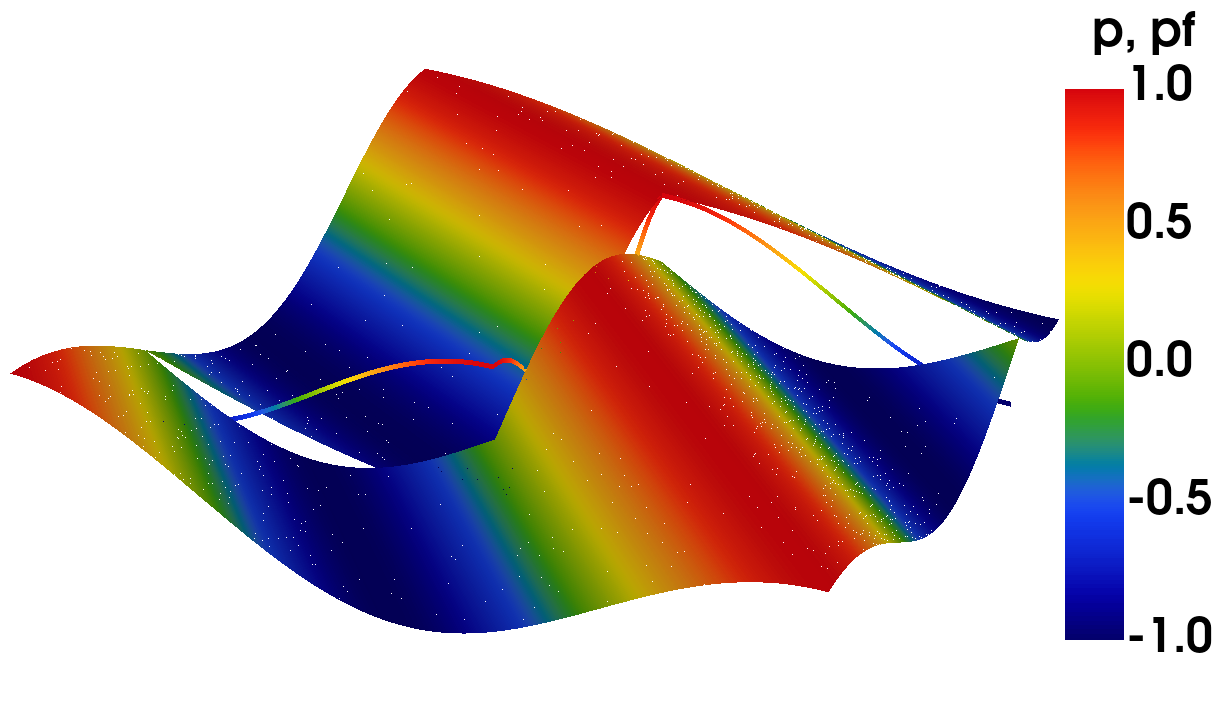}  
    \caption{Setup of the error convergence study in
      \cref{rmrk:nonconvex}. (Left) Geometry with a non-convex
      conducting fault in red, and a sample mesh. (Right) Discrete
      pressures on the finest mesh ($h=1/64$) with the HDG/DG
      discretization choosing $k=k_f=1$.}
    \label{fig:nonconvex}
  \end{figure}
  
  \begin{table}
    \centering
    \scriptsize{
      \begin{tabular}{l|lll}
        \hline
        $h$         & $\lVert u - u_h \rVert_{\Omega}$ & $\lVert p - p_h\rVert_{\Omega}$ & $\lVert p_f - p_{h, f}\rVert_{\Gamma_c}$ \\
        \hline
        \multicolumn{4}{c}{$k=1$}\\
        \hline
        0.5    & 2.288E+00(--) & 2.779E-01(--) & 2.269E-01(--) \\
        0.25   & 6.244E-01(1.87)  & 6.419E-02(2.11)  & 1.544E-01(0.56)  \\
        0.125  & 1.600E-01(1.96)  & 1.510E-02(2.09)  & 4.020E-02(1.94)  \\
        0.0625 & 4.030E-02(1.99)  & 3.639E-03(2.05)  & 9.959E-03(2.01)  \\
        0.0312 & 1.010E-02(2.00)  & 8.916E-04(2.03)  & 2.497E-03(2.00)  \\
        0.0156 & 2.526E-03(2.00)  & 2.206E-04(2.02)  & 6.252E-04(2.00)  \\
        \hline
        \multicolumn{4}{c}{$k=2$}\\
        \hline
        0.5    & 3.617E-01(--) & 3.519E-02(--) & 1.483E-01(--) \\
        0.25   & 3.885E-02(3.22)  & 3.990E-03(3.14)  & 1.538E-02(3.27)  \\
        0.125  & 4.656E-03(3.06)  & 4.685E-04(3.09)  & 1.973E-03(2.96)  \\
        0.0625 & 5.704E-04(3.03)  & 5.647E-05(3.05)  & 2.505E-04(2.98)  \\
        0.0312 & 7.065E-05(3.01)  & 6.921E-06(3.03)  & 3.136E-05(3.00)  \\
        0.0156 & 8.793E-06(3.01)  & 8.564E-07(3.01)  & 3.921E-06(3.00)  \\
        \hline
        \multicolumn{4}{c}{$k=3$}\\
        \hline
        0.5    & 2.748E-02(--) & 3.359E-03(--) & 4.758E-03(--) \\
        0.25   & 2.019E-03(3.77)  & 1.916E-04(4.13)  & 1.384E-03(1.78)  \\
        0.125  & 1.191E-04(4.08)  & 1.124E-05(4.09)  & 8.906E-05(3.96)  \\
        0.0625 & 7.086E-06(4.07)  & 6.773E-07(4.05)  & 5.588E-06(3.99)  \\
        0.0312 & 4.296E-07(4.04)  & 4.151E-08(4.03)  & 3.500E-07(4.00)  \\
        0.0156 & 2.640E-08(4.02)  & 2.568E-09(4.01)  & 2.189E-08(4.00)  \\
        \hline
      \end{tabular}
    }
    \caption{Convergence of the HDG/DG approximation with
      $k_f=k=1,2,3$ to the solution of the coupled problem setup in
      \cref{rmrk:nonconvex} with non-convex conducting
      fracture. Errors are reported in the $L^2$-norms on the
      respective domains. Estimated rates are shown in the brackets.
    }
    \label{tab:nonconvex}
  \end{table}
  
\end{remark}

%------------------------------------------------------------------------------
\subsection{Conducting faults}
\label{ex:conducting}

To illustrate the flexibility of modeling the conducting faults
through
\cref{eq:fault_problem_c,eq:fault_problem_d,eq:fault_problem_e} we
consider two model problems with $\Omega=(0, 1)^2$ having bulk
permeability $\kappa=1$ and which includes a single fracture
characterized by $g_f=1$ and $0.5<\xi\leq 1$.

In the first problem the domain contains an embedded fault extending
from $[0.25, 0.75]$ to $[0.75, 0.25]$ with thickness $d=10^{-3}$ and
$\overline{\kappa}_{f, \tau}=\overline{\kappa}_{f, n}=10^{8}$ (see
\cite{liu2024interior}). Boundary conditions are depicted in
\cref{fig:conducting_embedded}.  The authors in \cite{liu2024interior}
do not apply
\cref{eq:fault_problem_c,eq:fault_problem_d,eq:fault_problem_e} on the
fracture. Instead, they model the conducting fault by
\cref{eq:fault_problem_c} and the pressure continuity condition
$p_{+}=p_{-}=p_f$ on $\Gamma_c$, see also \cite{alboin1999domain}. As
noted in \cite{Martin:2005},
\cref{eq:fault_problem_c,eq:fault_problem_d,eq:fault_problem_e} allow
for pressure discontinuity on the fracture and the continuity
conditions $p_{+}=p_{-}=p_f$ correspond to the limit case
$\alpha_f = +\infty$ in \cref{eq:fault_problem_d,eq:fault_problem_e}
where fracture permeability is assumed to be large and the thickness
of the fracture is assumed to be small. Therefore, since
$\alpha_f=2\times 10^{11}$, for the reference solutions, we use
continuous linear Lagrange elements ($\textit{P}_1$) that enforce the
above mentioned pressure continuity conditions by construction of the
finite element space. In \cref{fig:conducting_embedded} we compare the
HDG/DG solutions to this reference $P_1$ solution by sampling the
pressure along the predefined line $y=0.5$ that crosses the fault. We
observe that the results of the HDG/DG method match well with the
results of the conforming scheme.

In the second problem, inspired by \cite{Martin:2005}, the domain is
split by a vertical fracture with $d=10^{-2}$ and a discontinuous
fracture permeability with
$\overline{\kappa}_{f, \tau}=\overline{\kappa}_{f, n}=1$ on segments
$AB$ and $CD$ and
$\overline{\kappa}_{f, \tau}=\overline{\kappa}_{f, n}=2\cdot 10^{-3}$
in the middle part $BC$. \Cref{fig:bench_mjr} shows this domain
together with boundary conditions. We use the mixed finite element
discretization proposed in \cite{Martin:2005} that uses the lowest
order Raviart--Thomas elements paired with piecewise constant
pressures to obtain a reference solution. In \cref{fig:bench_mjr} we
compare the HDG/DG solutions to the reference solution. As before, we
sample the pressure along a predefined line that crosses the
fault. The results are in very good agreement with
\cite{Martin:2005}. This shows that our method can handle different
values of $\alpha_f$, including the limit case. We note that in both
test problems the results of the HDG/DG scheme show little sensitivity
to the choice of parameter $\xi$. In the following test cases of
conducting faults we therefore set $\xi=0.75$.

\begin{figure}
  \centering
  \includegraphics[width=0.30\textwidth, align=c]{./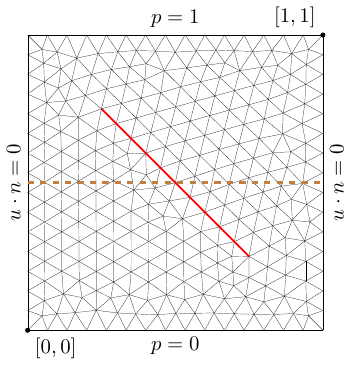}
  \includegraphics[width=0.34\textwidth, align=c]{./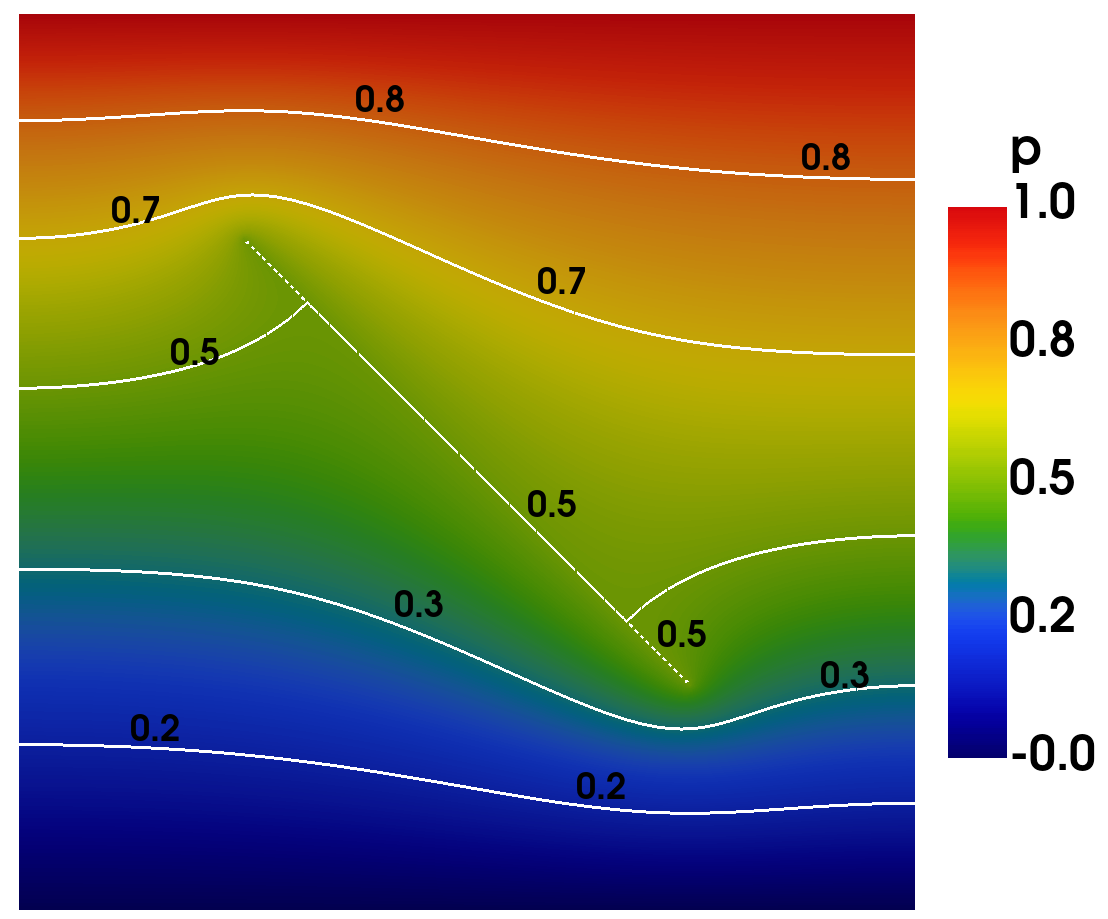}
  \includegraphics[width=0.34\textwidth, align=c]{./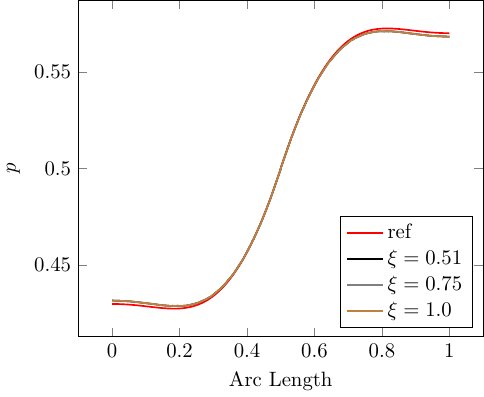}  
  \caption{Fully immersed conducting fault problem described in
    \cref{ex:conducting}. (Left) Problem setup together with the
    initial mesh. The conducting fault is depicted by a red line. To
    compare solutions of different discretizations we sample the
    pressure along the brown dashed line. (Center) The pressure
    distribution obtained by the HDG/DG method with
    $\xi=0.75$. (Right) Comparison of HDG/DG solutions for three
    different values of $\xi$ to a reference $\textit{P}_1$ solution
    of the fracture model \cite{alboin1999domain} in terms of pressure
    values sampled along the brown dashed line in the left panel. The
    conforming scheme reflects the modeling assumption
    $p_{+}=p_{-}=p_f$ on $\Gamma_c$ \cite{alboin1999domain}. Note that
    no value of $\xi$ is needed for the reference solution. The
    HDG/DG solution was computed on a coarse mesh with 2176 cells
    while a fine mesh consisting of 30056 cells is used to compute the
    $\textit{P}_1$ reference solution.  }
  \label{fig:conducting_embedded}
\end{figure}

\begin{figure}
  \centering
  \includegraphics[width=0.30\textwidth, align=c]{./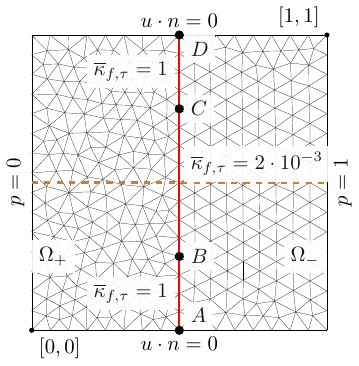}
  \includegraphics[width=0.34\textwidth, align=c]{./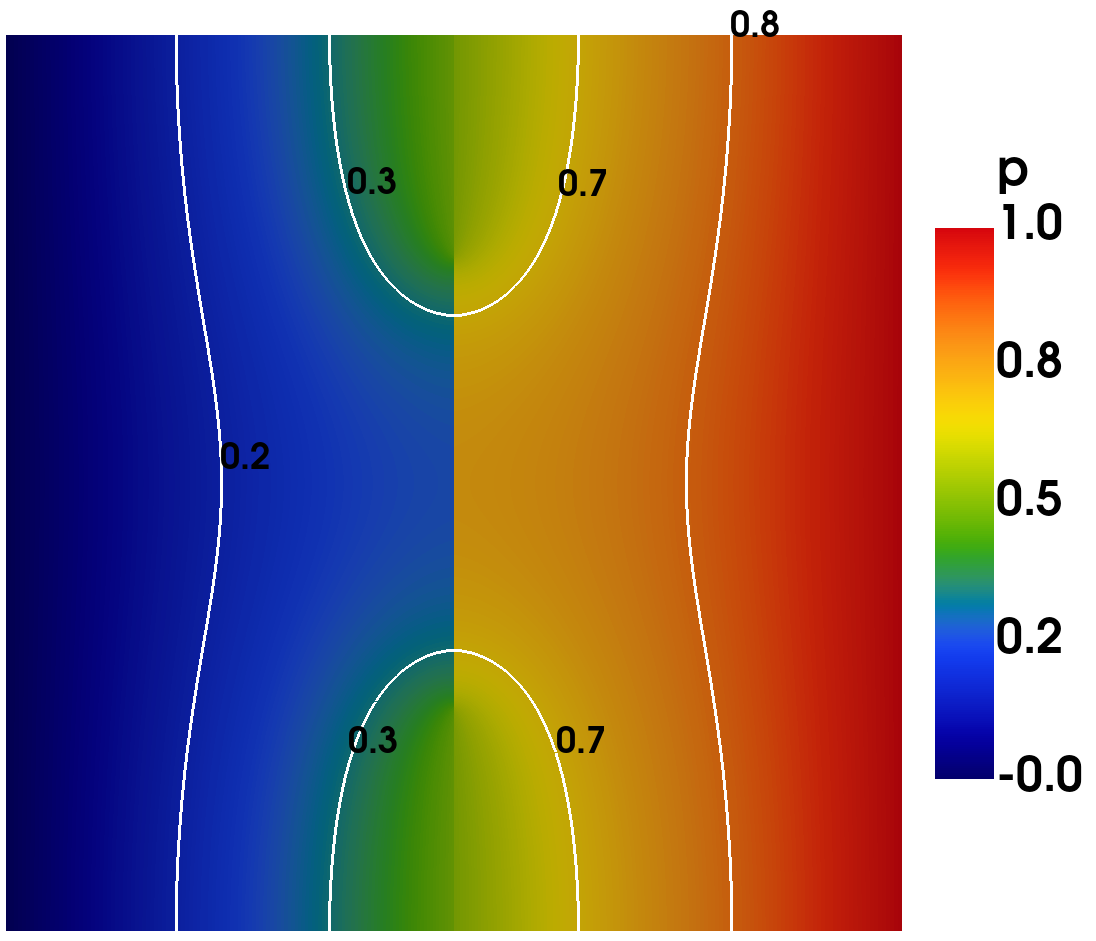}
  \includegraphics[width=0.34\textwidth, align=c]{./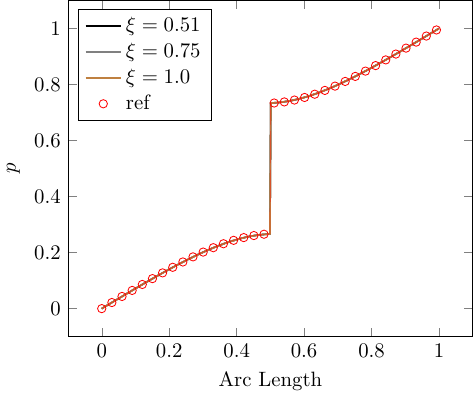}  
  \caption{Conducting fault with discontinuous permeability
    $\overline{\kappa}_{f, n}=\overline{\kappa}_{f, \tau}$ as
    described in \cref{ex:conducting}. (Left) Problem setup together
    with the initial mesh. The conducting fault is depicted by a red
    line. To compare solutions of different discretizations we sample
    the pressure along the brown dashed line. (Center) The pressure
    distribution obtained by the HDG/DG method with $\xi=0.75$.
    (Right) Comparison of HDG/DG solutions for three different values
    of $\xi$ to a reference $\textit{RT}_0$-$\textit{P}_0$ solution
    \cite{Martin:2005} where $\xi=1$ in terms of pressure values
    sampled along the brown dashed line in the left panel. The HDG/DG
    solution was computed on a coarse mesh with 2286 cells while a
    fine mesh with 32354 cells was used for the reference
    solution. For clarity, the reference solution is shown as a thin
    red line as well as using markers.}
  \label{fig:bench_mjr}
\end{figure}

%------------------------------------------------------------------------------
\subsection{Sealing faults}
\label{ex:sealing}

We next consider sealing faults. We consider the domain
$\Omega=(0, 1)^2$ which includes either a single partially immersed
sealing fault extending from $[0.5, 1]$ to $[0.5, 0.5]$ or a fully
immersed sealing fault from $[0.25, 0.75]$ to $[0.75, 0.25]$. We set
the bulk permeability as $\kappa=1$, the fault thickness as
$d=10^{-3}$, and $\overline{\kappa}_{f,n}=10^{-8}$. We prescribe a
pressure gradient in the horizontal direction by setting $p=0$ on the
left boundary and $p=1$ on the right boundary. On the bottom and top
boundaries we impose $u \cdot n = 0$. See the left and center panels
in \cref{fig:blocking_slit,fig:blocking_embedded} for the problem
setup and pressure solution, respectively.

As in \cref{ex:conducting} will compare the pressure solution obtained
with the HDG/DG method ($k=k_f=1$) to a reference solution obtained
with a conforming scheme and on a finer mesh. For the conforming
scheme, however, we now consider a mixed discretization
\cite{lee2022forward} using the $\textit{RT}_0$-$\textit{P}_0$
elements.  This discretization enforces flux conservation on
$\Gamma_s$ by construction. As before the comparison is made by
sampling the pressure along a predefined line that crosses the
fault. From the right most panels in
\cref{fig:blocking_slit,fig:blocking_embedded} we once again observe a
good match of the pressure values and that the results are in good
agreement with the results from \cite{liu2024interior}.

\begin{figure}
  \centering
  \includegraphics[align=c, width=0.30\textwidth]{./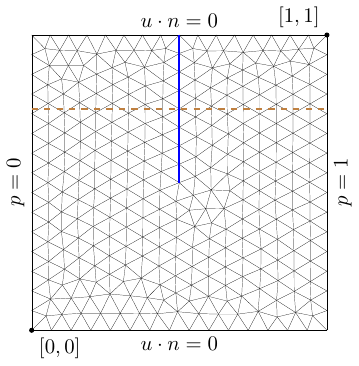}
  \includegraphics[align=c, width=0.34\textwidth, trim={300 0 160 0},clip]{./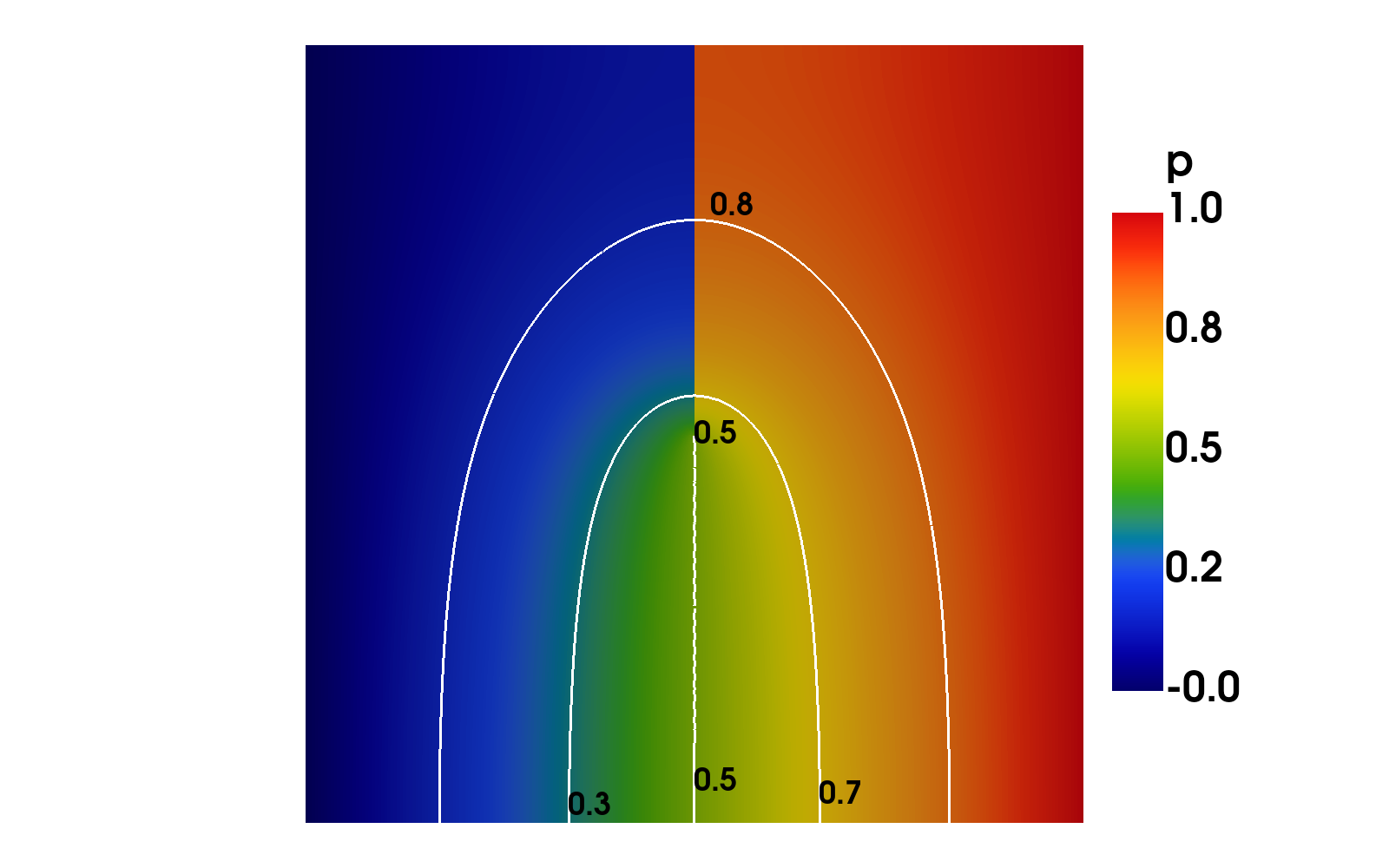}
  \includegraphics[align=c, width=0.34\textwidth]{./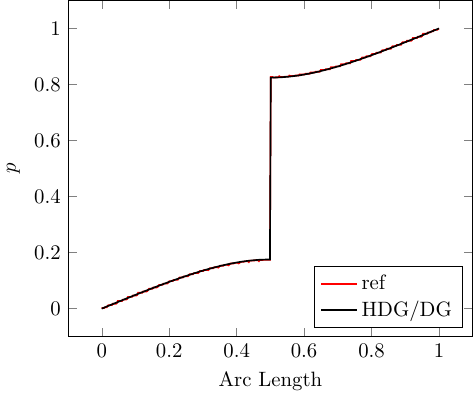}  
  \caption{Partially immersed sealing fault problem as described in
    \cref{ex:sealing}. (Left) Problem setup together with the initial
    mesh. The sealing fault is depicted by a blue line. To compare
    solutions of different discretizations we sample the pressure
    along the brown dashed line. (Center) The pressure distribution
    obtained by the HDG/DG method. (Right) Comparison of HDG/DG to the
    $\textit{RT}_0$-$\textit{P}_0$ reference solution in terms of
    pressure values sampled along the brown dashed line in the left
    panel. The HDG/DG solution was computed on a coarse mesh
    consisting of 2037 cells while a fine mesh consisting of 7703
    cells is used to compute the $\textit{RT}_0$-$\textit{P}_0$
    reference solution.}
  \label{fig:blocking_slit}
\end{figure}

\begin{figure}
  \centering
  \includegraphics[align=c, width=0.30\textwidth]{./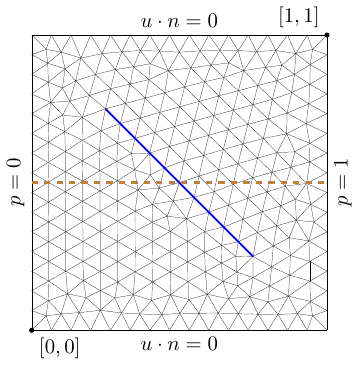}
  \includegraphics[align=c, width=0.34\textwidth, trim={300 0 160 0},clip]{./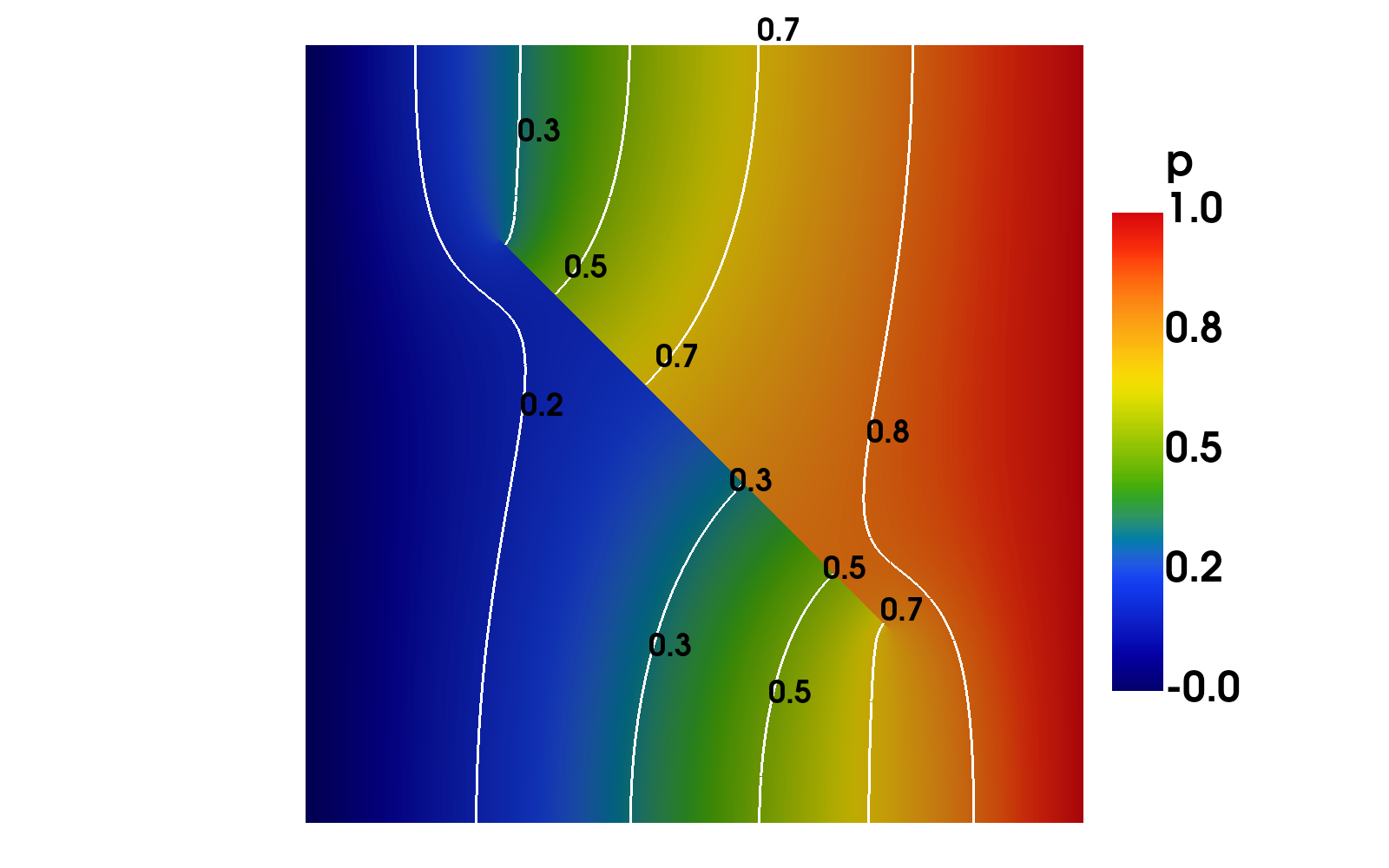}
  \includegraphics[align=c, width=0.34\textwidth]{./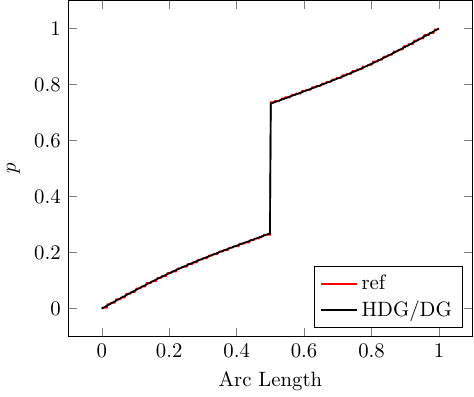}  
  \caption{Fully immersed sealing fault problem as described in
    \cref{ex:sealing}. (Left) Problem setup together with the initial
    mesh. The sealing fault is depicted by a blue line. To compare
    solutions of different discretizations we sample the pressure
    along the brown dashed line. (Center) The pressure distribution
    obtained by the HDG/DG method. (Right) Comparison of HDG/DG to the
    $\textit{RT}_0$-$\textit{P}_0$ reference solution in terms of
    pressure values sampled along the brown dashed line in the left
    panel. The HDG/DG solution was computed on a coarse mesh
    consisting of 1950 cells while a fine mesh consisting of 7580
    cells is used to compute the $\textit{RT}_0$-$\textit{P}_0$
    reference solution.}
  \label{fig:blocking_embedded}
\end{figure}

%------------------------------------------------------------------------------
\subsection{Intersecting faults}
\label{ex:intersecting}

In this final numerical examples section we consider three benchmark
problems from \cite{flemisch2018benchmarks}:
\begin{enumerate}
\item The hydrocoin problem. For this problem the domain is depicted
  in \cref{fig:hydrocoin}. The domain contains two intersecting
  conducting faults with permeability
  $\overline{\kappa}_{f,\tau}=\overline{\kappa}_{f,n}=10^{-6}$. The
  width of the $AB$ fault is $d=5\sqrt{2}$ and the width of the second
  fault is $d=33/\sqrt{5}$. The surrounding medium has permeability
  $\kappa=10^{-8}$.
\item The regular fault network problem. For this problem,
  $\kappa = 1$ and the domain includes axis-aligned conducting faults
  with width $d=10^{-4}$ and
  $\overline{\kappa}_{f,\tau}=\overline{\kappa}_{f,n}=10^{4}$. See
  \cref{fig:geiger}.
\item The complex fault network problem. This problem includes eight
  conducting faults with
  $\overline{\kappa}_{f,\tau}=\overline{\kappa}_{f,n}=10^{4}$ and two
  sealing faults with $\overline{\kappa}_{f,n}=10^{-4}$, see
  \cref{fig:complex}. The width of all faults is $d=10^{-4}$ and we
  set $\kappa=1$ in the bulk.
\end{enumerate}
In all the examples when modeling the conducting faults we set
$\xi=0.75$ in \cref{eq:fault_problem_d,eq:fault_problem_e},
cf. \cref{ex:conducting}, and set $g_f=0$. The boundary conditions for
each of the benchmark problems are specified in the left most panels
of \cref{fig:hydrocoin,fig:geiger,fig:complex}.

As in \cref{ex:conducting,ex:sealing} the comparison of solutions of
the HDG/DG discretization against the reference solutions is done in
terms of pressure solutions along predefined lines crossing (some of)
the faults. For the HDG/DG methods we choose $k=k_f=1$ and relatively
coarse meshes (7705 cells for the hydrocoin problem, 2382 cells for
the regular fault network problem, and 2744 cells for the complex
fault network problem). We use a mimetic finite difference (MFD)
method \cite{brezzi2005family}, on much finer meshes, to compute the
reference solution because this is the method used to obtain the
results in \cite{flemisch2018benchmarks}.

In all three problems the HDG/DG pressure solution agrees well with
the reference pressure solution obtained with the MFD method. The
solution profiles also match well with those given in
\cite{flemisch2018benchmarks}.

\begin{figure}
  \centering
  \includegraphics[align=c, height=0.23\textwidth]{./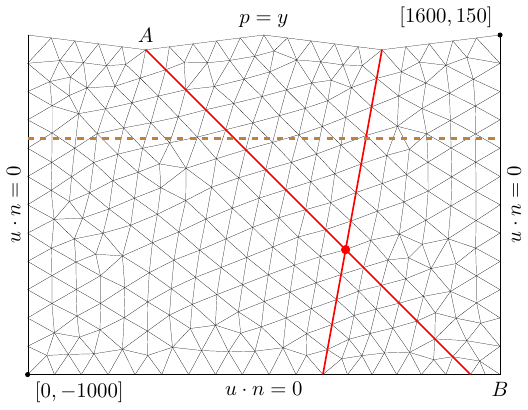}
  \includegraphics[align=c, height=0.23\textwidth]{./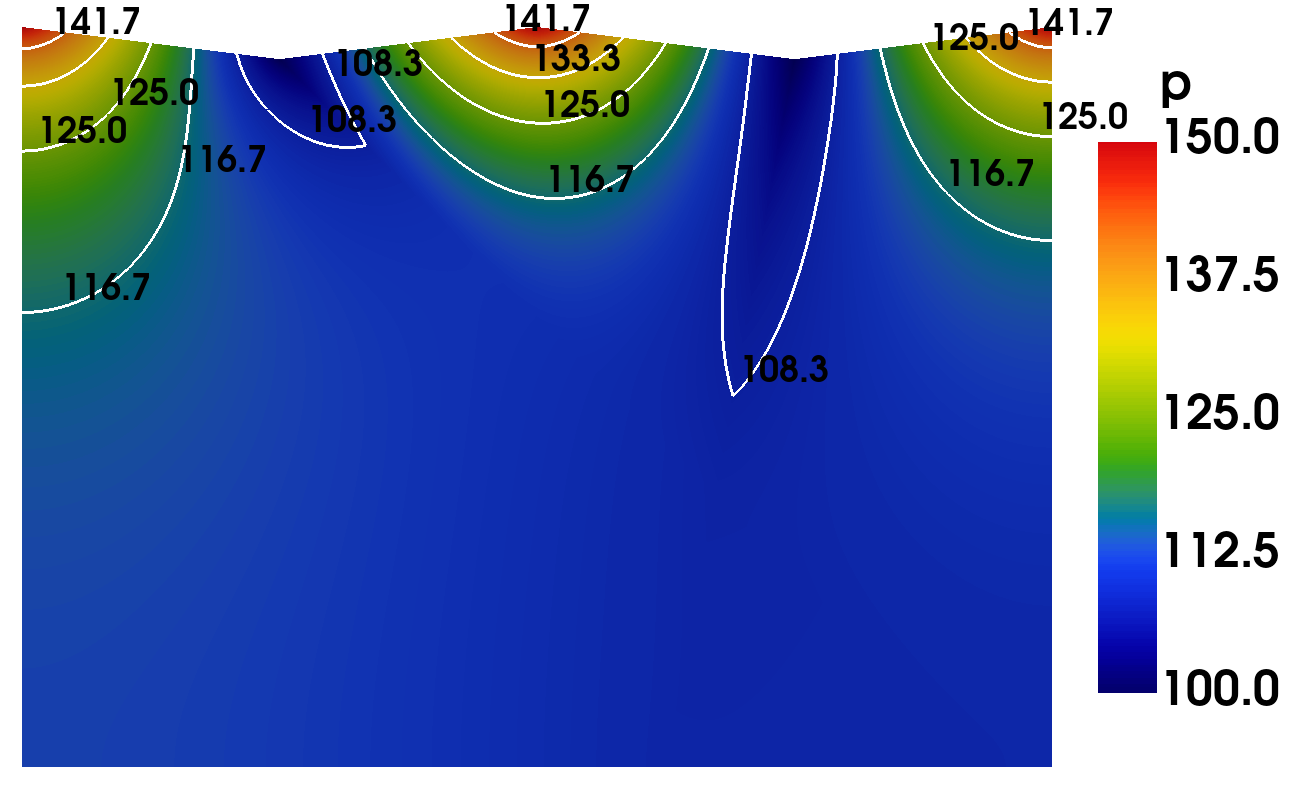}
  \includegraphics[align=c, height=0.24\textwidth]{./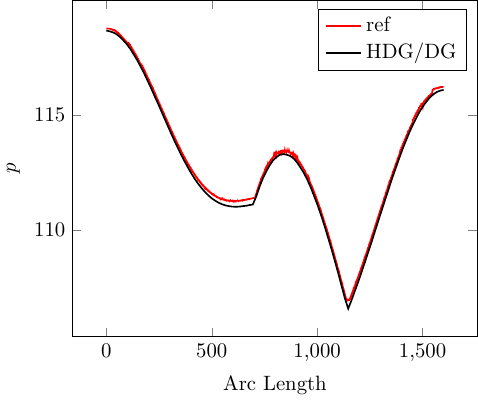}  
  \caption{The hydrocoin problem as described in
    \cref{ex:intersecting}. (Left) Problem setup together with the
    initial mesh. Conducting faults are depicted by red lines. To
    compare solutions of different discretizations we sample the
    pressure along the brown dashed line. (Center) The pressure
    distribution obtained by the HDG/DG method. (Right) Comparison of
    the HDG/DG pressure solution computed on a mesh with 7705 cells to
    the reference pressure solution computed using MFD on a mesh with
    approximately 42500 (mixed - quadrilateral/triangular) cells
    \cite{flemisch2018benchmarks}.}
  \label{fig:hydrocoin}
\end{figure}

\begin{figure}
  \centering
  \includegraphics[align=c, width=0.3\textwidth]{./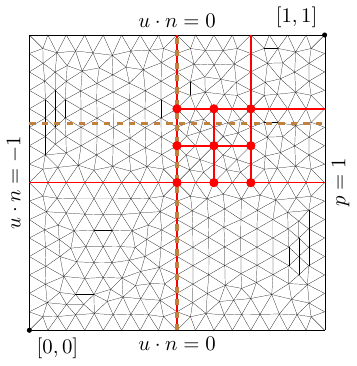}
  \includegraphics[align=c, width=0.32\textwidth]{./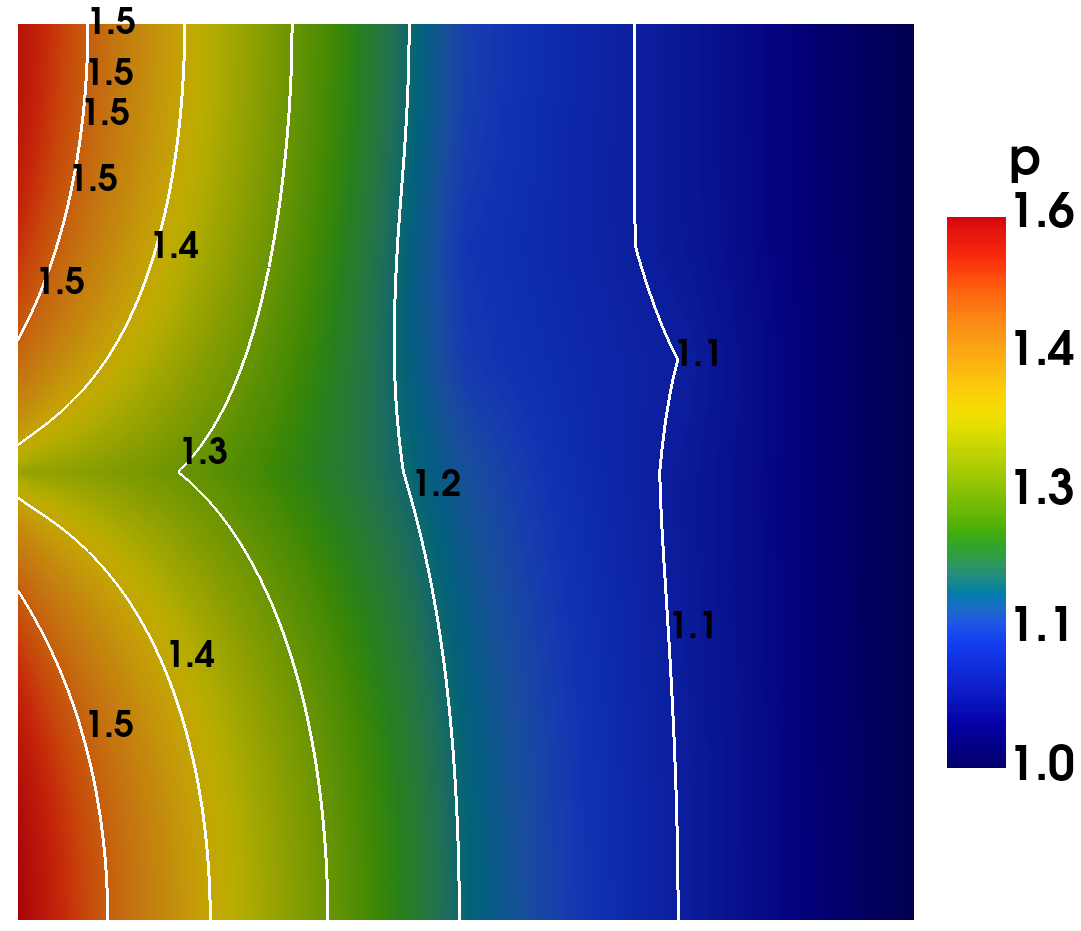}\\
  \includegraphics[align=c, width=0.35\textwidth]{./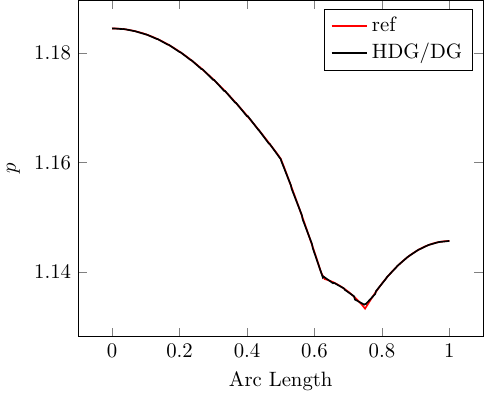}
  \includegraphics[align=c, width=0.35\textwidth]{./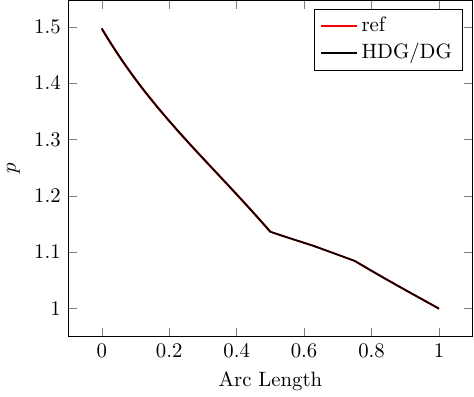}    
  \caption{The regular fault network problem as described in \cref{ex:intersecting}. (Top left) Problem setup
    together with the initial mesh. Conducting faults are depicted by
    red lines. To compare solutions of different discretizations we
    sample the pressure along the brown dashed lines, $x=0.5$, which
    coincides with a fault, and $y=0.75$. (Top right) The pressure
    distribution obtained by the HDG/DG method. (Bottom) Comparison of
    the HDG/DG pressure solution in terms of pressure values on a line
    $x=0.5$ (left) and a line $y=0.75$ (right) computed on a mesh with
    2382 cells to the reference pressure solution computed using MFD
    on a mesh with approximately 1.1 million (triangular) cells
    \cite{flemisch2018benchmarks}.
  }
  \label{fig:geiger}
\end{figure}

\begin{figure}
  \centering
  \includegraphics[align=c, width=0.30\textwidth]{./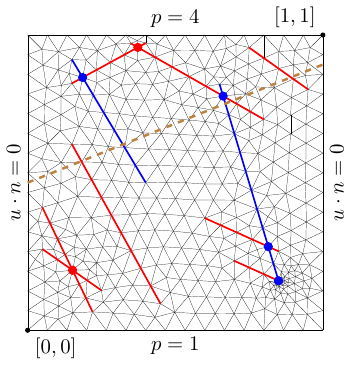}
  \includegraphics[align=c, width=0.34\textwidth]{./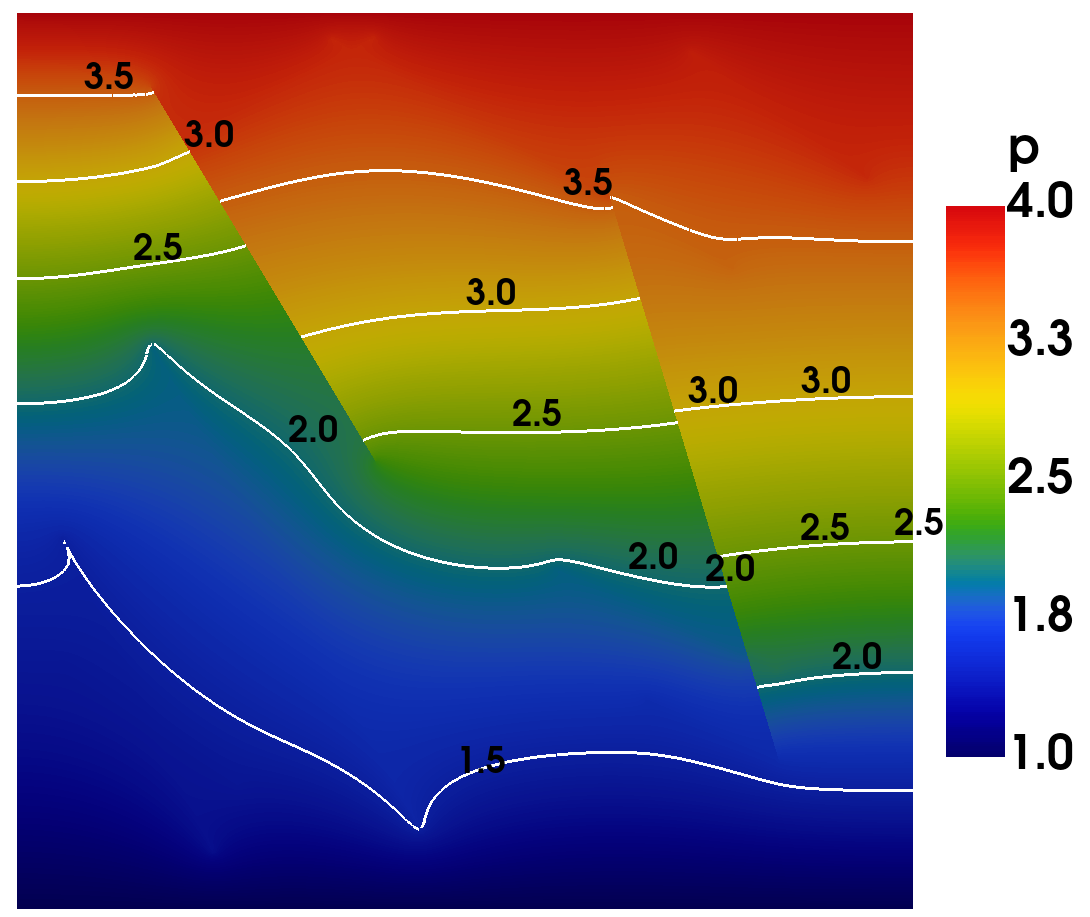}
  \includegraphics[align=c, width=0.34\textwidth]{./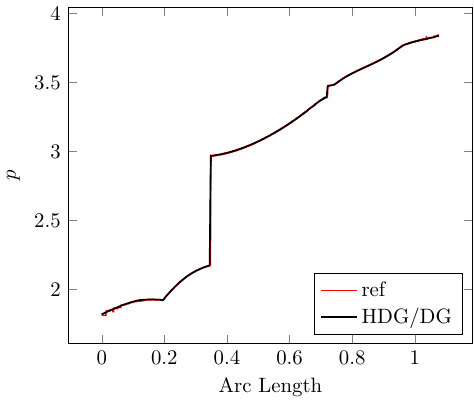}  
  \caption{The complex fault network problem as described in
    \cref{ex:intersecting}. (Left) Problem setup together with the
    initial mesh. Conducting/sealing faults are depicted by red/blue
    lines. To compare solutions of different discretizations we sample
    the pressure along the brown dashed line which crosses multiple
    conducting and sealing faults. (Center) The pressure distribution
    obtained by HDG/DG method. (Right) Comparison of the HDG/DG
    pressure solution computed on a mesh with 2744 cells to the
    reference pressure solution computed using MFD on a mesh with
    approximately 1.2 million (quadrilateral/triangular) cells
    \cite{flemisch2018benchmarks}.}
  \label{fig:complex}
\end{figure}

%------------------------------------------------------------------------------
\section{Conclusions}
\label{s:conclusions}

Due to the introduction of $(\dim-1)$-dimensional face unknowns, the
HDG discretization provides a natural framework to couple
$\dim$-dimensional problems to $(\dim-1)$-dimensional problems. We
exploit this to introduce a discretization for porous media with
faults in which we discretize the $\dim$-dimensional Darcy equations
by an HDG method and the $(\dim-1)$-dimensional fault problem by an
IPDG method. We proved the well-posedness of this discretization and
presented an a priori error analysis. Manufactured solution numerical
examples verify our analysis while benchmark problems show good
comparison of the HDG/DG solution with solutions obtained with other
discretizations. Future work includes the design of iterative solvers
specifically for the hybridized form of the proposed HDG/DG
discretization.

%------------------------------------------------------------------------------
\section*{Acknowledgments}

AC and JJL are supported by the National Science Foundation under
Grant No. DMS-2110782 and DMS-2110781, respectively. MK acknowledges
funding from the Research Council of Norway (grant no. 303362) and SR
was funded by a Discovery Grant from the Natural Sciences and
Engineering Research Council of Canada (RGPIN-2023-03237).

%------------------------------------------------------------------------------
\appendix
%------------------------------------------------------------------------------
\section{Two alternative hybridizable discretizations}
\label{ap:numres-alt}

In \cref{s:hdgipdg} we presented the HDG/DG discretization for the
porous media with faults problem \cref{eq:fault_problem}. We now
present two alternative, but very similar, discretizations. These are:
(i) the hybridizable discontinuous Galerkin/continuous Galerkin
(HDG/CG) method; and (ii) the embedded discontinuous
Galerkin/continuous Galerkin method (EDG/CG).

Both discretizations consider a continuous approximation to the
pressure in the fault; the finite element space $Q^f_h$ in
\cref{eq:fem_spaces} is replaced by
\begin{equation*}
  Q_h^f := \cbr[0]{q_h^f \in C^0(\Gamma_c) \,:\, q_h^f \in P_{k_f}(F) \ \forall F \in \mathcal{F}_c^f}, \quad k_f=k.
\end{equation*}
For the HDG/CG scheme, the remaining spaces in \cref{eq:fem_spaces}
are the same. For the EDG/CG scheme, following
\cite{guzey2007embedded}, we replace $\bar{Q}_h$ in
\cref{eq:fem_spaces} by
\begin{equation*}
  \bar{Q}_h := \cbr[0]{\bar{q}_h \in C^0((\cup_{F \in \mathcal{F}} F)\backslash \Gamma_c) \,:\, \bar{q}_h \in P_k(F) \ \forall F \in \mathcal{F}\backslash \mathcal{F}_c^f}.
\end{equation*}
An illustration of the face spaces of the different discretizations is
shown in \cref{fig:face_spaces} for the conducting fault case.

\begin{figure}
  \begin{center}
    \includegraphics[height=0.22\textwidth]{./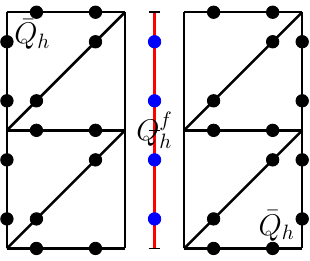}
    \hspace{25pt}
    \includegraphics[height=0.22\textwidth]{./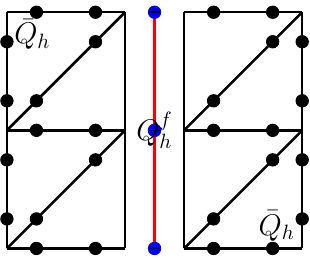}
    \hspace{25pt}    
    \includegraphics[height=0.22\textwidth]{./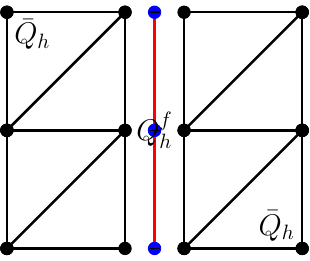}         
    \caption{Schematic of the face function spaces involved in the
      HDG/DG (left), HDG/CG (center), and EDG/CG (right)
      discretizations of a single conducting fault problem. The black
      and blue nodes indicate the degrees of freedom of $\bar{Q}_h$
      and $Q^f_h$, respectively, when linear ($k=k_f=1$) elements are
      used on the faces.}
    \label{fig:face_spaces}
  \end{center}
\end{figure}

\begin{figure}
  \centering
  \includegraphics[align=c,height=0.3\textwidth]{./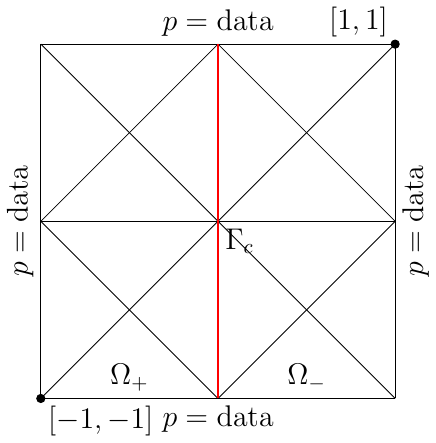}
  \hspace{2pt}
  \includegraphics[align=c, height=0.3\textwidth]{./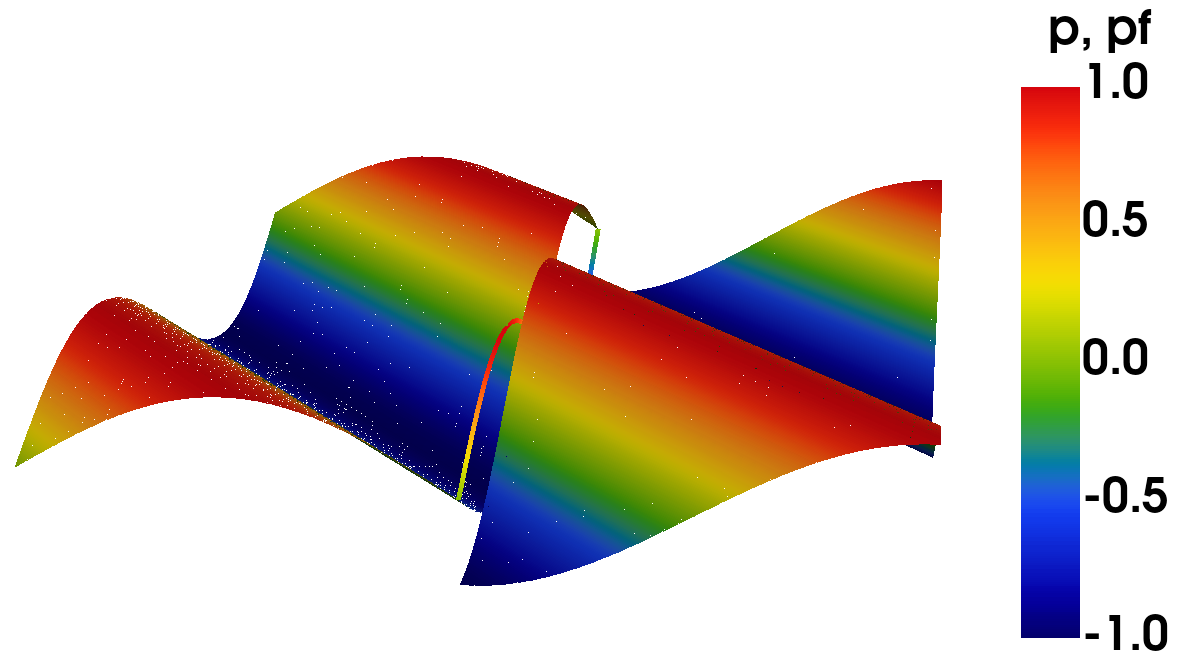}  
  \caption{Setup of the error convergence study for comparing HDG/DG,
    HDG/CG and EDG/CG discretizations. (Left) Geometry with a
    conducting fault in red, and a sample mesh. (Right) Discrete
    pressures on the finest mesh ($h=1/64$) with the HDG/DG
    discretization choosing $k=k_f=1$.}
  \label{fig:method_compare}
\end{figure}

To demonstrate approximation properties of the three discretizations
we consider for simplicity a problem setup with a single conducting
fault. Specifically, we let $\Omega=(-1, 1)^2$ where the domain is
split by a vertical fracture, cf. \cref{fig:method_compare}.  We
define the exact solution $p$ and $p_f$ as follows
\begin{equation*}
  p|_{\Omega_{+}} = \sin(\pi(x+y)),\quad
  p|_{\Omega_{-}} = \cos(\pi(x+y)),\quad
  p_f = \sin(\pi(x-y)).
\end{equation*}
We prescribe $p$ and $p_f$ as boundary data on $\partial\Omega$ and
$\partial\Gamma_c = (\partial\Gamma_c)^D$, respectively. The model
parameters are set as $\alpha_f=2$, $\xi=0.75$, $\kappa_f=3$ and we
take a discontinuous permeability $\kappa$ such that
$\kappa|_{\Omega_+}=5$, $\kappa|_{\Omega_-}=4$.

\Cref{fig:alternative_cvrg} summarizes the errors and rates of
convergence using HDG/DG, HDG/CG and EDG/CG methods with $k=1, 2, 3$
and $k_f=k$. As expected from an EDG discretization, the error in
$u_h$ is $\mathcal{O}(h^k)$ (see, for example,
\cite{Cockburn:2009}). The errors in other variables are
$\mathcal{O}(h^{k+1})$. For HDG/CG, errors in all variables are
$\mathcal{O}(h^{k+1})$, as expected.

\begin{figure}
  \includegraphics[width=0.9\textwidth]{./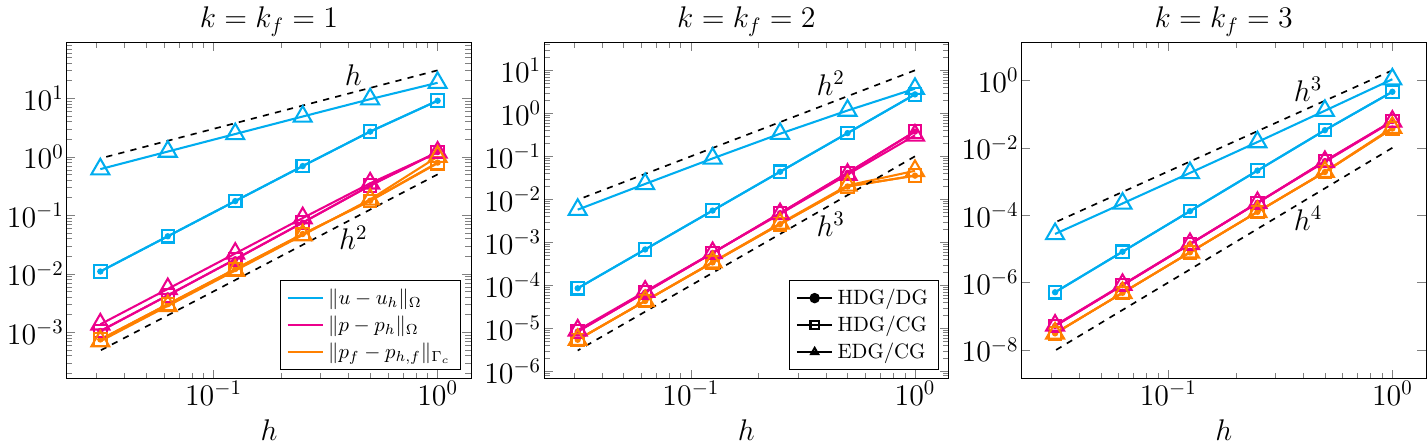}
  \caption{Rates of convergence of the HDG/CG, EDG/CG and HDG/CG
    approximations to the solution of the problem described in
    \ref{ap:numres-alt}. Results due to a particular scheme are
    encoded by a specific marker. Colors represent the errors in the
    different quantities.}
  \label{fig:alternative_cvrg}
\end{figure}

%------------------------------------------------------------------------------
\bibliographystyle{plain}
\bibliography{references}

%------------------------------------------------------------------------------
\end{document}